\theoremstyle{plain}%
\newtheorem{theorem}{Theorem}
\newtheorem{proposition}[theorem]{Proposition}%
\newtheorem{lemma}[theorem]{Lemma}%
\theoremstyle{definition}%
\newtheorem{definition}[theorem]{Definition}%
\newtheorem{example}[theorem]{Example}%
\newtheorem{remark}[theorem]{Remark}%
\numberwithin{theorem}{section}
\numberwithin{equation}{section}
\begin{document}

\title[A Variation Problem for Mappings between Statistical Manifolds]{A Variation Problem for Mappings between Statistical Manifolds}


\author[1]{\fnm{Hitoshi} \sur{Furuhata}}

\author*[2]{\fnm{Ryu} \sur{Ueno}}


\affil*[1,2]{\orgdiv{Department of Mathematics}, 
\orgname{Hokkaido University}, 
\orgaddress{
\city{Sapporo}, \postcode{060-0810}, 
\country{Japan}}}



\abstract{
We present statistical biharmonic maps, a new class of mappings between statistical manifolds naturally derived from a variation problem. 
We give the Euler-Lagrange equation of this problem 
and prove that improper affine hyperspheres induce examples of such maps. 
}

\keywords{statistical manifolds, harmonic maps, biharmonic maps, 
affine minimal hypersurfaces, improper affine hyperspheres}


\pacs[Mathematical Subject Classification]{
53B12, 53A15, 58E20, 53C43}

\maketitle


\section{Introduction}\label{sec1}

A statistical manifold is nothing but a Riemannian manifold 
equipped with a torsion-free affine connection 
satisfying the Codazzi equation. 
Although the origin of this curious terminology is in information geometry, 
it is a fact that 
various research fields have provided 
interesting examples of statistical manifolds. 
At the beginning of the 20th century, 
W.~Blaschke developed the hypersurface theory in the equiaffine space. 
A pair of so-called affine fundamental form 
and the induced affine connection is a statistical structure on the hypersurface. 
Many types of research on affine hyperspheres, affine minimal hypersurfaces, 
and so on have a relation to the study of statistical structures 
(See Example \ref{Equiaffine Geometry}). 

In Riemannian geometry, harmonic maps might be understood 
as the most important research subject derived from the variation principle. 
Finding the corresponding subjects in the geometry of statistical manifolds 
is an interesting task for us. 
In fact, the Euler-Lagrange equation, the harmonic map equation, can be generalized 
in the statistical manifold setting (See \cite{MR4010286, MR3329737} for example). 
However, it is hard to find attempts to generalize them as the variation problem itself. 
In this paper, we will take notice of a certain class of mappings 
between statistical manifolds determined from a variation problem. 

Let $(M,g,\nabla^M)$ be a statistical manifold. 
By definition, the affine connection $\nabla^M$ is not necessarily 
the Levi-Civita connection of $g$. 
If so, we call the statistical manifold {\it Riemannian}. 
Assume temporarily that $M$ is compact for the sake of simplicity. 
Let $(N,h,\nabla^N)$ be a statistical manifold. 
For a smooth map $u:M\to N$, we define 
\begin{eqnarray}\label{tension}
        \tau(u)&=&        
        \tau^{(g, \nabla^M,\nabla^N)}(u)  \\
        &=&
        \mathrm{tr}_g\{(X,Y)\mapsto\nabla^N_X u_*Y-u_*\nabla^M_X Y\} 
        \in \Gamma(u^{-1}TN), \nonumber
\end{eqnarray}
and 
\begin{eqnarray}\label{eq:bienergy}
    E_2(u)
    &=&E_2^{(g, \nabla^M, h, \nabla^N)}(u)  \\
    &=&\frac{1}{2} \int_{M} \| \tau(u) \|_{h}^2 d\mu_g,  \nonumber
\end{eqnarray}
where $d\mu_g$ is the standard measure derived from the Riemanian metric $g$. 
We remark that the functional $E_2$ is defined 
by using the statistical structures of both the source and the target. 
It seems that our $E_2$ is the simplest one among such functionals.  
This functional has been known as the {\sl bi-energy}\/ for a smooth map 
in the case where both statistical manifolds are Riemannian. 
In this case, the first variation formula is given by Jiang Guoying, 
and its critical point is called a {\sl biharmonic map}\/ 
(See \cite{GuoyingJiang, MR3889042} for example). 

In the statistical manifold case, 
we obtain the first variation formula (Theorem \ref{FVF}),  
and $\tau_2(u)=0$ as the Euler-Lagrange equation, where  
\begin{eqnarray} \label{eq:bitension}
\tau_2(u)
&=& \tau_2^{(g,\nabla^M, h,\nabla^N)}(u)   \\
&=&        
\bar{\Delta}^u\tau(u)+\operatorname{div}^g
(\mathrm{tr}_g K^M)\tau(u) \nonumber\\
&&\quad 
-\sum_{i=1}^mL^N(u_*e_i,\tau(u)) u_*e_i -K^N(\tau(u),\tau(u) )
\quad 
\in \Gamma(u^{-1}TN).      \nonumber
\end{eqnarray}
The symbols will be prepared in \S 2.  
In the case where both statistical manifolds are Riemannian, 
the above equation of course restores Jiang's equation. 
We will call $\tau_2(u)$  
the {\sl statistical bi-tension field}\/ of $u$, 
and say that $u$ is a {\sl statistical biharmonic}\/ map if $\tau_2(u)=0$. 
This class might be the first one derived from  a variation problem 
for mappings between statistical manifolds. 

We illustrate simple examples of statistical biharmonic maps in \S 4. 
We remark that a map $u$ with $\tau(u)=0$ is statistical biharmonic by definition. 
We will introduce one of the interesting examples beforehand
(Theorem \ref{affineimmersion}). 
Let $f: M \to (\mathbb{R}^{m+1}, D, \det)$ be an improper affine hypersphere,  
and $(g, \nabla^M)$ the induced statistical structure on $M$ by $f$. 
Although the setting is equiaffine geometry, 
the standard connection $D$ is considered as the Levi-Civita connection 
of the Euclidean metric $g_0$, 
and the volume form $\det$ is also naturally determined by $g_0$. 
An improper affine hypersphere $f$ is statistical biharmonic 
as a map from the statistical manifold $(M, g, \nabla^M)$ 
to the Riemannian statistical manifold $(\mathbb{R}^{m+1}, g_0, D)$. 
In this case, $\tau(f)$ does not vanish, though $\tau_2(f)$ vanishes.   
Improper affine hyperspheres are important research objects in affine geometry 
(See \cite{MR1823925, NomizuSasaki} for example).  

This paper is organized as follows. 
In \S 2, we prepare the basic notion of statistical manifolds,  
particularly, Laplacian, which is useful for our first variation formula. 
\S 3 devotes the proof of the first variation formula. 
In \S 5, we give some conditions which imply $\tau=0$ from $\tau_2=0$. 
In the Riemannian setting, harmonic maps are naturally biharmonic maps, 
and in many settings, the converse is also true, 
as the Chen conjecture \cite{MR1143504} has suggested.  
We give the corresponding properties in the statistical setting. 

\vspace{0.5\baselineskip}

\section{Preliminaries}\label{sec2} 

\subsection{Statistical manifolds}\label{subsec2.1}

Throughout this paper, all the objects are assumed to be smooth.
$M$ denotes a manifold of dimension $m \ge 2$, $C^{\infty}(M)$ the set of functions on $M$, and $\Gamma(E)$ the set of sections of a vector bundle $E$ over $M$.

In this section, we quickly fix the notation of the geometry of statistical manifolds.
Let $(g, \nabla)$ be a statistical structure on $M$,
that is, $g$ is a Riemannian metric and
$\nabla$ is a torsion-free affine connection satisfying
$(\nabla_X g)(Y,Z)=(\nabla_Yg)(X,Z)$ for $X, Y, Z \in \Gamma(TM)$.
We denote by $\nabla^g$ the Levi-Civita connection of $g$.

\vspace{0.5\baselineskip}

\begin{definition}
Let $(M,g,\nabla)$ be a statistical manifold.

(1) \ 
We set $K=K^{(g,\nabla)} \in \Gamma(TM^{(1,2)})$ by
$$
K(X,Y)= K_XY= \nabla_X Y-\nabla^g_X Y,
\quad X, Y \in \Gamma(TM).   
$$
We also denote it by $K^M$ if necessary. 

(2) \ 
We define an affine connection $\overline{\nabla}$ by
$$
Xg(Y,Z)=g(\nabla_XY,Z)+g(Y, \overline{\nabla}_XZ),
\quad X, Y, Z \in \Gamma(TM),    
$$
and call it the {\sl conjugate connection}\/ of $\nabla$ with respect to $g$. 

(3) \ 
We define the curvature tensor field $R^\nabla \in \Gamma(TM^{(1,3)})$ 
of $\nabla$ by
$$
R^\nabla (X,Y)Z=\nabla_X \nabla_Y Z-\nabla_Y \nabla_X Z-\nabla_{[X,Y]}Z,
\quad X, Y, Z \in \Gamma(TM). 
$$
On a statistical manifold $(M,g,\nabla)$, we often denote $R^{\nabla}$ by $R$, 
$R^{\nabla^g}$ by $R^g$, and $R^{\overline{\nabla}}$ by $\overline{R}$, for short.  
\end{definition}

\vspace{0.5\baselineskip}

\begin{remark}
The following formulas hold for $X, Y, Z, W \in \Gamma(TM)$: 
\begin{eqnarray}
&&
(\nabla g)(Y,Z;X)=(\nabla_X g)(Y,Z)=-2g(K(X,Y),Z), 
\label{CandK} \\
&&
\overline{\nabla}_X Y=\nabla^g_X Y-K(X,Y), \\
&&
g(\overline{R}(X,Y)Z,W)=-g(Z, R(X,Y)W), \\
&&\frac{1}{2}(R+\overline{R})(X,Y)Z=R^g(X,Y)Z+\lbrack K_X, K_Y \rbrack Z. \label{scurvature}
\end{eqnarray}
\end{remark}

\vspace{0.5\baselineskip}

\begin{definition}
A statistical manifold $(M, g, \nabla)$ is said to be {\sl conjugate symmetric}\/
if $R=\overline{R}$ holds. 
\end{definition}

\vspace{0.5\baselineskip}

The properties of the curvature tensor field $R^\nabla$ related to $g$  
are similar to the ones of $R^g$ 
if the considering statistical manifold is conjugate symmetric.  
In particular, we have 
\begin{equation}
g(R^\nabla (Z,W)X,Y)=g(R^\nabla (X,Y)Z,W)
\end{equation}
for $X, Y, Z, W \in \Gamma(TM)$. 

\vspace{0.5\baselineskip}

\begin{definition}
A statistical manifold $(M, g, \nabla)$ is said 
to be {\sl trace-free}\/ 
or to satisfy the {\sl apolarity condition}\/ 
if  $\mathrm{tr}_gK=0$.  
\end{definition}

\vspace{0.5\baselineskip}

The {\sl apolarity}\/ is in the terminology of equiaffine geometry. 
On the other hand, in centroaffine geometry,
$T=T^{(g, \nabla)}=\frac{1}{m}\mathrm{tr}_gK \in \Gamma(TM)$ is often called 
the {\sl Tchebychev vector field}, 
and $\mathcal{T}=\mathcal{T}^{(g, \nabla)} 
=\nabla^g T \in \Gamma(TM^{(1,1)})$
the {\sl Tchebychev operator}. See \cite{MR1317826} for example. 

\vspace{0.5\baselineskip}

\begin{definition}
Let $(M,g,\nabla)$ be a statistical manifold. 
The {\sl curvature interchange}\/  tensor field  
$L=L^{(g,\nabla)} \in\Gamma(TM^{(1,3)})$ is defined by
    $$
    g(L(Z,W)X,Y) = g(R^\nabla (X,Y)Z,W),\quad X,Y,Z,W\in\Gamma(TM).
    $$
We denote $\bar{L}=L^{(g, \overline{\nabla})}$. 
\end{definition}

\vspace{0.5\baselineskip}

\begin{example}
(1) \ 
If a statistical structure $(g,\nabla)$ is conjugate symmetric, 
then $L=R$ holds. 

(2) \ 
Suppose that $R$ is written as 
    \begin{equation}\label{eq:GuassEqn0}
        R(X,Y)Z=g(Y,Z)SX-g(X,Z)SY,\quad X,Y,Z\in\Gamma(TM) 
    \end{equation}
for some $S \in \Gamma(TM^{(1,1)})$ such that $g(SX,Y)=g(X,SY)$. 
Then the curvature interchange tensor field of  $(g,\nabla)$ is given as 
   \begin{equation*}
        L(Z,W)X=-g(X,Z)SW+g(SY,W)Z.
    \end{equation*}
\end{example}

\vspace{0.5\baselineskip}

\begin{remark}
The curvature interchange tensor field $L$ has the following properties 
for $X,Y,Z,W\in\Gamma(TM)$:  
    \begin{eqnarray*}
        &&L(Z,W)Y=-\bar{L}(W,Z)Y, \\
        &&g(L(Z,W)Y,X)=-g(Y,L(Z,W)X), \\
        &&\overline{R}(Y,Z)W=L(Y,W)Z-L(Z,W)Y.
    \end{eqnarray*}
\end{remark}

\vspace{0.5\baselineskip}

\begin{example}\label{geost}
On the Euclidean space $(\mathbb{R}^2,g_0)$, 
define a torsion-free affine connection $\nabla$ as follows.
\begin{equation*}
        \nabla_{\frac{\partial}{\partial x}}\frac{\partial}{\partial x}
        =\frac{\partial}{\partial x},\quad \nabla_{\frac{\partial}{\partial x}}\frac{\partial}{\partial y}=0,\quad \nabla_{\frac{\partial}{\partial y}}\frac{\partial}{\partial y}=\frac{\partial}{\partial y}.
\end{equation*}
The triplet $(\mathbb{R}^2,g_0,\nabla)$ is a statistical manifold. 
It is easy to see that $\nabla$ is flat, 
and accordingly the statistical structure $(g_0,\nabla)$ is conjugate symmetric. 
We have that $\nabla^{g_0}\mathrm{tr}_{g_0}K=0$ while $\mathrm{tr}_{g_0}K\neq0$ on some points.
\end{example}

\vspace{0.5\baselineskip}

\begin{example}\label{Equiaffine Geometry}
Let $(\mathbb{R}^{m+1}, D, \det)$ be a standard equiaffine space. 
By definition, $D$ denotes the standard flat affine connection of $\mathbb{R}^{m+1}$, 
and $\det$ the standard volume form,  which is given as the determinant 
with respect to the standard affine coordinate system with respect to $D$. 
The group $SL(m+1, \mathbb{R}) \ltimes \mathbb{R}^{m+1}$ acts on this space 
preserving these structures. 
Two hypersurfaces $f_1, f_2: M \to \mathbb{R}^{m+1}$ are {\sl equiaffinely congruent}\/  
if there exist $A \in SL(m+1, \mathbb{R})$ and $b \in \mathbb{R}^{m+1}$ such that 
$f_2(x)=A f_1(x)+b$ for all $x \in M$. 

Let $f:M \to \mathbb{R}^{m+1}$ be a locally strongly convex hypersurface 
with the Blaschke normal vector field $\xi \in \Gamma(f^{-1}T\mathbb{R}^{m+1})$. 
By definition, $f$ and $\xi$ satisfy the following properties: 
At each point $x \in M$, we have the decomposition 
$T_{f(x)}\mathbb{R}^{m+1}=(df)_xT_xM \oplus \mathbb{R} \xi_x$.  
According to this decomposition, define $\nabla, h, S$ and $\tau$ by 
\begin{eqnarray*}
&&
D_Xf_*Y=f_*\nabla_XY+h(X,Y)\xi, \\
&&
D_X \xi=-f_*SX+\tau(X)\xi
\end{eqnarray*}
for $X,Y\in \Gamma(TM)$. 
Then 
(1) $\tau$ vanishes, and $h$ is a Riemannian metric on $M$.   
(2) $d \mu_h(X_1, \ldots, X_m)
=\det(f_* X_1, \ldots, f_* X_m, \xi)
$ holds for $X_1, \ldots, X_m \in \Gamma(TM)$. 
We remark that $\xi$ is determined for $f$, 
as a unit normal vector field in Euclidean geometry.  
We often call $h$ the {\sl Blaschke metric}, 
and $S$ the {\sl affine shape operator}\/ of $f$, respectively. 

Such a hypersurface $f$ is called an {\sl improper affine hypersphere} \/ if $S=0$, 
and an {\sl affine minimal hypersurface}\/ if $\mathrm{tr}\, S=0$. 

By the Codazzi equation, a well-known integrability condition, 
the above $(h, \nabla)$ is a statistical structure on $M$, 
which is called the statistical structure {\sl equiaffinely induced by}\/ $f$.  
Furthermore, the condition (2) implies that 
$(h, \nabla)$ satisfies the apolarity condition: $\mathrm{tr}_hK=0$. 
We have the same formula to \eqref{eq:GuassEqn0} as the Gauss equation: 
\begin{equation*}
        R^\nabla(X,Y)Z=h(Y,Z)SX-h(X,Z)SY,\quad X,Y,Z\in\Gamma(TM). 
\end{equation*}
For more detail, see \cite{NomizuSasaki} for example.  
\end{example}

\vspace{0.5\baselineskip}

\begin{example}
\label{pbsp}
    Set $\Omega=\{1,\ldots,n+1\}$, and $\mathcal{S}_n$ as the set of all the positive probability functions on $\Omega$, that is,
    \begin{equation*}
        \mathcal{S}_n=\left\{p:\Omega\to (0,1)\middle|\quad \sum_{\omega=1}^{n+1}p(\omega)=1\right\}.
    \end{equation*}
     The set $\mathcal{S}_n$ is a smooth $n$-dimensional manifold with an atlas consisted of a chart $\phi:\mathcal{S}_n \ni p \mapsto (p(1),\ldots,p(n)) \in \mathbb{R}^{n}$. 
     Denote the coordinate system on $\mathcal{S}_n$ derived from $\phi$ by $(\eta_1,\ldots, \eta_n)$. The Fisher information metric $g^F$ on $\mathcal{S}_n$ is defined as
     \begin{equation*}
     \begin{split}
         g_p^F\left(\frac{\partial}{\partial \eta^i},\frac{\partial}{\partial \eta^j}\right)&=\sum_{\omega=1}^{n+1}p(\omega)\frac{\partial}{\partial \eta^i}\mathrm{ln}(p(\omega))\frac{\partial}{\partial \eta^j}\mathrm{ln}(p(\omega))\\
         &=\frac{\delta_{ij}}{p(i)}+\frac{1}{p(n+1)},\quad p\in\mathcal{S}_n.
     \end{split}
     \end{equation*}
The Riemannian metric $g^F$ has constant curvature $1/4$. 
The dual metric of $g^F$ can be determined by the following formula.
\begin{equation*}
    g_p^F(d\eta^i,d\eta^j)=-p(i)p(j)+\delta_{ij}p(i). 
\end{equation*}

     The flat affine connection $\nabla^{(m)}$ defined by $\nabla^{(m)}\frac{\partial}{\partial \eta^i}=0$ on $\mathcal{S}_n$, is often called the \textit{mixture connection}. The pair $(g^F,\nabla^{(m)})$ is a statistical structure on $\mathcal{S}_n$, and the conjugate connection of $\nabla^{(m)}$ with respect to $g^F$ is denoted by $\nabla^{(e)}$, often called the \textit{exponential connection}.
     
On the statistical manifold $(\mathcal{S}_n,g^F,\nabla^{(e)})$, 
$K=\nabla^{(e)}-\nabla^{g^F}$ satisfies the following formulas 
for each $p\in\mathcal{S}_n$: 
\begin{eqnarray*}
&&
g_p^F\left(
K\left(\frac{\partial}{\partial \eta^i},\frac{\partial}{\partial \eta^j}\right), 
\frac{\partial}{\partial \eta^k}\right)
=-\frac{1}{2}\left(\frac{\delta_{ij}\delta_{jk}}{p(i)^2}-\frac{1}{p(n+1)^2}\right), \\
&&         
\mathrm{tr}_gK_p
=\frac{1}{2}\sum_{i=1}^n\left( (n+1)p(i)-1 \right) \frac{\partial}{\partial \eta^i}_p, \\
&&
\operatorname{div}^g(\mathrm{tr}_gK)(p)
=\frac{1}{4}\left(n^2-1+\sum_{i=1}^{n+1}\frac{1}{p(i)}\right) .
\end{eqnarray*}
\end{example}

\vspace{0.5\baselineskip}

\subsection{Connection Laplacians of statistical manifolds}\label{subsec2.2} 

\begin{definition}
Let $(M, g, \nabla)$ be a statistical manifold, $E \to M$ a vector bundle 
with connection $\nabla^E : \Gamma(E) \to \Gamma(E \otimes T^*M)$. 
We define the operator 
$\Delta^E=\Delta^{(g, \nabla, \nabla^E)} : \Gamma(E) \to \Gamma(E)$ as 
\begin{equation*}
    \Delta^E \xi 
    =\mathrm{tr}_g \{ (X,Y) \mapsto \nabla^E_X\nabla^E_Y \xi -\nabla^E_{\nabla_X Y}\xi \}, 
\end{equation*}
and call it the {\sl statistical connection Laplacian}\/ 
with respect to $(g, \nabla)$ and $\nabla^E$.  
\end{definition}

\vspace{0.5\baselineskip}

In the case where $(g,\nabla)$ is Riemannian, 
it is known as the connection Laplacian or the rough Laplacian with respect to $g$ as well. 

Let  $\langle\cdot,\cdot\rangle$  be a fiber metric on $E$. 
We have a connection $\overline{\nabla}^{\substack{\scalebox{0.3}{\phantom{i}}\\E}}$ on $E$ such that
\begin{equation*}
X\langle\xi,\eta\rangle
=\langle\nabla^E_X\xi,\eta\rangle+\langle\xi,\overline{\nabla}^{\substack{\scalebox{0.3}{\phantom{i}}\\E}}_X\eta\rangle 
\end{equation*}
for $X\in\Gamma(TM)$ and $\xi,\eta\in\Gamma(E)$.
The symbol $\bar{\Delta}^E$ denotes the statistical connection Laplacian 
with respect to $(g, \overline{\nabla})$ and $\overline{\nabla}^{\substack{\scalebox{0.3}{\phantom{i}}\\E}}$. 

\vspace{0.5\baselineskip}

\begin{example}
Let $(M, g, \nabla)$ be a statistical manifold of dimension $m$. 
Take an orthonormal basis $\{e_i\}_{i=1, \ldots,m}$ with respect to $g$. 

(1) \ 
For a trivial bundle $E=M \times \mathbb{R}$, 
the statistical connection Laplacian $\Delta=\Delta^E: C^\infty(M) \to C^\infty(M)$ 
is given as 
\begin{eqnarray}
\Delta f 
&=& \sum_{i=1}^m \{ e_i(e_i f)-(\nabla_{e_i}e_i)f\} 
\nonumber \\
&=& \sum_{i=1}^m \{ e_i(e_i f)-(\nabla^g_{e_i}e_i)f\} 
- \sum_{i=1}^m K(e_i,e_i) f \nonumber \\
&=& \Delta_g f -(\mathrm{tr}_g K) f  \label{Lapf}
\end{eqnarray}
for $f \in C^\infty(M)$, 
where $\Delta_g$ is the Laplacian with respect to $g$, 
which is the standard tool in Riemannian geometry.  
Furthermore, we have 
$
\bar{\Delta} f=\Delta_g f+(\mathrm{tr}_g K) f
$.

(2) \ 
Let $N$ be a manifold with an affine connection $\nabla^N$. 
For a map $u:M \to N$, 
we denote by $\nabla^u$ 
the connection on $u^{-1}TN$ induced from $\nabla^N$ by $u$: 
$
\nabla^u_X U=\nabla^N_{u_* X}U \in \Gamma(u^{-1}TN)
$ for 
$X \in \Gamma(TM)$ and $U \in \Gamma(u^{-1}TN)$. 
The symbol $\Delta^u =\Delta^{u^{-1}TN}: \Gamma(u^{-1}TN) \to \Gamma(u^{-1}TN)$ 
denotes the statistical connection Laplacian 
with respect to $(g, {\nabla})$ and $\nabla^u$. 
\end{example}

\vspace{0.5\baselineskip}

We now review divergences and Green's formula for later use. 

For a volume form $\theta$ on $M$, the $\theta$-divergence $\operatorname{div}^{\theta}:\Gamma(TM)\to C^{\infty}(M)$ is defined by
\begin{equation*}
\operatorname{div}^{\theta}X\theta=\mathcal{L}_X\theta,\quad X\in\Gamma(TM), 
\end{equation*}
where $\mathcal{L}$ is the Lie derivative on $M$. 
On the other hand, for an affine connection $\nabla$ on $M$, 
the $\nabla$-divergence $\operatorname{div}^{\nabla}$ is defined by
\begin{equation*}
\operatorname{div}^{\nabla}X=\mathrm{tr}\nabla X,\quad X\in\Gamma(TM).
\end{equation*}
If $\theta$ is parallel with respect to $\nabla$, that is, $\nabla\theta=0$, then
\begin{equation*}    
\operatorname{div}^{\theta}=\operatorname{div}^{\nabla}.
\end{equation*}
For a Riemannian metric $g$, 
we will denote the $\nabla^g$-divergence by $\operatorname{div}^g$. 

The statistical Laplacian has the formula of
\begin{equation*}
    \Delta f=\operatorname{div}^{\overline{\nabla}}(\mathrm{grad}_gf), 
\end{equation*}
where the gradient vector field $\mathrm{grad}_gf$ of $f$ with respect to $g$ is defined by 
$g(\mathrm{grad}_g f, X)=df(X)$ 
for $X \in \Gamma(TM)$. 

The following facts are well known.  
See \cite{KobayashiNomizu1} for example. 

\vspace{0.5\baselineskip}

\begin{proposition}
{\rm (1)}\ For any $f \in C^\infty(M)$ and $X \in \Gamma(TM)$, we have 
\begin{equation}\label{divfor}
\operatorname{div}^{\nabla}(f X)
=Xf+f\operatorname{div}^{\nabla}X. 
\end{equation}
    
{\rm (2)} (Green's formula) \ 
If $M$ is compact, for any $X\in\Gamma(TM)$ we have
\begin{equation*}        
\int_{M}\operatorname{div}^{\theta}X\theta=0.
\end{equation*}
Besides, if $X \in \Gamma(TM)$ satisfies $\operatorname{div}^\theta X=0$, 
then for any $f \in C^\infty(M)$ we have 
\begin{equation*}   
\int_{M}\, Xf \, \theta=0.
\end{equation*}
\end{proposition}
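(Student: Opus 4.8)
The plan is to treat the three assertions separately, each reducing to a standard identity supplemented, in the integral statements, by Stokes' theorem. For part (1), I would work in a local frame $\{e_i\}$ with dual coframe $\{\theta^i\}$ and apply the Leibniz rule $\nabla_Y(fX)=(Yf)X+f\nabla_Y X$. Writing $X=\sum_i X^i e_i$, the trace of the $(1,1)$-tensor $Y\mapsto\nabla_Y(fX)$ is $\sum_i\theta^i(\nabla_{e_i}(fX))=\sum_i (e_i f)\theta^i(X)+f\sum_i\theta^i(\nabla_{e_i}X)$. The first sum equals $\sum_i X^i(e_i f)=Xf$ and the second is $\mathrm{tr}\,\nabla X=\operatorname{div}^\nabla X$, which yields \eqref{divfor}. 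This is a routine computation with no real obstacle.

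For the first identity in part (2), the key observation is that $\theta$ is a top-degree form, so $d\theta=0$. Cartan's formula then gives $\operatorname{div}^\theta X\,\theta=\mathcal{L}_X\theta=d\iota_X\theta+\iota_X d\theta=d(\iota_X\theta)$, an exact $m$-form. Since $M$ is compact and without boundary, Stokes' theorem yields $\int_M\operatorname{div}^\theta X\,\theta=\int_M d(\iota_X\theta)=0$.

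For the second identity, I would first establish the product rule $\operatorname{div}^\theta(fX)=Xf+f\operatorname{div}^\theta X$ at the level of the volume form. Expanding $\mathcal{L}_{fX}\theta=d(f\,\iota_X\theta)=df\wedge\iota_X\theta+f\,d\iota_X\theta$ and using the interior-product identity $\iota_X(df\wedge\theta)=(Xf)\theta-df\wedge\iota_X\theta$ together with $df\wedge\theta=0$ (again a form of degree $m+1$), one finds $df\wedge\iota_X\theta=(Xf)\theta$, hence the product rule. Under the hypothesis $\operatorname{div}^\theta X=0$ this reduces to $\operatorname{div}^\theta(fX)=Xf$, so applying the first identity of part (2) to the vector field $fX$ gives $\int_M Xf\,\theta=\int_M\operatorname{div}^\theta(fX)\,\theta=0$.

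The only point requiring care, and the closest thing to an obstacle here, is keeping the two notions of divergence separate: part (1) concerns the connection divergence $\operatorname{div}^\nabla$, whereas part (2) uses the volume-form divergence $\operatorname{div}^\theta$, so the product rule must be re-derived in the $\theta$-setting rather than imported from (1). Once both product rules and the vanishing of $(m+1)$-forms are in hand, everything reduces to Cartan's formula and Stokes' theorem, as indicated.
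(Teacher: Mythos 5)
Your proof is correct. Note that the paper itself offers no proof of this proposition---it is stated as a collection of well-known facts with a pointer to Kobayashi--Nomizu---so there is no in-paper argument to compare against; your derivation (Leibniz rule plus trace for the connection divergence, and Cartan's formula $\mathcal{L}_X\theta=d\iota_X\theta$ with Stokes' theorem for the $\theta$-divergence statements, including the correctly re-derived product rule $\operatorname{div}^\theta(fX)=Xf+f\operatorname{div}^\theta X$) is exactly the standard textbook argument those references contain, and your care in distinguishing $\operatorname{div}^\nabla$ from $\operatorname{div}^\theta$ is well placed.
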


\vspace{0.5\baselineskip}

Using these facts, we have the following formula which is well-known 
in the case where a statistical manifold is Riemannian. 

\vspace{0.5\baselineskip}

\begin{proposition}\label{lapfor}
Let $(M, g, \nabla)$ be a compact statistical manifold. 
Let $E$ be a vector bundle over $M$ 
with a connection $\nabla^E$ and a fiber metric $\langle \cdot, \cdot \rangle$. 
The formula 
    \begin{equation}\label{lapfor1}        
    \int_{M}\langle\Delta^E\xi,\eta\rangle d\mu_g
=\int_{M}\langle\xi,\Bar{\Delta}^E\eta\rangle d\mu_g
    +\int_{M} \operatorname{div}^g(\mathrm{tr}_gK) \langle\xi, \eta\rangle d\mu_g
    \end{equation}
    holds for any $\xi,\eta\in\Gamma(E)$.
\end{proposition}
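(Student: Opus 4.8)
The plan is to reduce the identity to Green's formula by rewriting the integrand difference $\langle\Delta^E\xi,\eta\rangle-\langle\xi,\bar{\Delta}^E\eta\rangle$ as a pointwise $\nabla^g$-divergence plus an explicit multiple of $\langle\xi,\eta\rangle$. First I would introduce two auxiliary vector fields $V,U\in\Gamma(TM)$ determined by $g(V,X)=\langle\nabla^E_X\xi,\eta\rangle$ and $g(U,X)=\langle\xi,\overline{\nabla}^E_X\eta\rangle$ for all $X\in\Gamma(TM)$; both are well defined because the right-hand sides are $C^\infty(M)$-linear in $X$. The decisive choice is to pair $V$ with the $\overline{\nabla}$-divergence and $U$ with the $\nabla$-divergence, since the conjugacy relations then move the connection onto exactly the factor of the fiber metric that is being differentiated.

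Working in a local $g$-orthonormal frame $\{e_i\}$ and using the defining relation of $\overline{\nabla}$ together with the compatibility $X\langle\alpha,\beta\rangle=\langle\nabla^E_X\alpha,\beta\rangle+\langle\alpha,\overline{\nabla}^E_X\beta\rangle$, I expect to obtain
\begin{align*}
\operatorname{div}^{\overline{\nabla}}V &= \langle\Delta^E\xi,\eta\rangle+\sum_i\langle\nabla^E_{e_i}\xi,\overline{\nabla}^E_{e_i}\eta\rangle, \\
\operatorname{div}^{\nabla}U &= \langle\xi,\bar{\Delta}^E\eta\rangle+\sum_i\langle\nabla^E_{e_i}\xi,\overline{\nabla}^E_{e_i}\eta\rangle.
\end{align*}
The point is that the two cross terms $\sum_i\langle\nabla^E_{e_i}\xi,\overline{\nabla}^E_{e_i}\eta\rangle$ coincide, so subtracting yields the clean identity $\langle\Delta^E\xi,\eta\rangle-\langle\xi,\bar{\Delta}^E\eta\rangle=\operatorname{div}^{\overline{\nabla}}V-\operatorname{div}^{\nabla}U$.

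Next I would convert these affine divergences into the Riemannian one $\operatorname{div}^g$. Writing $\nabla=\nabla^g+K$ and $\overline{\nabla}=\nabla^g-K$ and using that $g(K(\cdot,\cdot),\cdot)$ is fully symmetric, a consequence of \eqref{CandK} and the Codazzi condition, a short trace computation gives $\operatorname{div}^{\nabla}X=\operatorname{div}^gX+g(\mathrm{tr}_gK,X)$ and $\operatorname{div}^{\overline{\nabla}}X=\operatorname{div}^gX-g(\mathrm{tr}_gK,X)$ for every $X$. Substituting, and using $g(\mathrm{tr}_gK,V)+g(\mathrm{tr}_gK,U)=(\mathrm{tr}_gK)\langle\xi,\eta\rangle$ (again by compatibility, from the definitions of $V,U$), produces the pointwise identity $\langle\Delta^E\xi,\eta\rangle-\langle\xi,\bar{\Delta}^E\eta\rangle=\operatorname{div}^g(V-U)-(\mathrm{tr}_gK)\langle\xi,\eta\rangle$.

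Finally I would integrate over the compact $M$ against $d\mu_g$. Since $d\mu_g$ is $\nabla^g$-parallel we have $\operatorname{div}^g=\operatorname{div}^{d\mu_g}$, so Green's formula annihilates $\int_M\operatorname{div}^g(V-U)\,d\mu_g$. For the remaining term I apply the product formula \eqref{divfor} to $f=\langle\xi,\eta\rangle$ and the field $\mathrm{tr}_gK$, that is $\operatorname{div}^g(f\,\mathrm{tr}_gK)=(\mathrm{tr}_gK)f+f\operatorname{div}^g(\mathrm{tr}_gK)$, and Green's formula once more to get $\int_M(\mathrm{tr}_gK)\langle\xi,\eta\rangle\,d\mu_g=-\int_M\langle\xi,\eta\rangle\operatorname{div}^g(\mathrm{tr}_gK)\,d\mu_g$; combining the two integrations gives \eqref{lapfor1}. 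The only genuine subtlety, the \emph{hard part}, is the bookkeeping of which connection pairs with which field: one must take the $\overline{\nabla}$-divergence of $V$ and the $\nabla$-divergence of $U$ (not the reverse) so that the cross terms cancel, and one must keep straight the opposite signs of the $\mathrm{tr}_gK$-contributions arising from $\nabla$ versus $\overline{\nabla}$; the rest is a routine orthonormal-frame computation.
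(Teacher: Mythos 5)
Your proof is correct and is essentially the paper's own argument: your $V$ and $U$ are exactly the paper's auxiliary fields $X$ and $Y$, and the paper simply packages your final correction term as a third field $Z=\langle\xi,\eta\rangle\,\mathrm{tr}_gK$ so that one application of Green's formula to $\operatorname{div}^g(X-Y-Z)$ finishes the proof. Your detour through $\operatorname{div}^{\overline{\nabla}}V$ and $\operatorname{div}^{\nabla}U$ (followed by converting to $\operatorname{div}^g$ and a second use of Green's formula) is only a reorganization of the same orthonormal-frame computation, with the same cross-term cancellation.
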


\begin{proof}
We set vector fields as 
\begin{equation*}
X=\sum_{i=1}^m\langle\nabla_{e_i}^E\xi,\eta\rangle e_i, \quad 
Y=\sum_{i=1}^m\langle\xi,\overline{\nabla}^{\substack{\scalebox{0.3}{\phantom{i}}\\E}}_{e_i}\eta\rangle e_i, \quad
Z=\langle\xi,\eta\rangle \mathrm{tr}_gK, 
\end{equation*}
where $\{ e_i \}_{i=1, \ldots, m}$ is an orthonormal basis with respect to $g$. 
Then, we have
\begin{equation}\label{lapfor0}
\operatorname{div}^g(X-Y-Z)
=
 \langle\Delta^E\xi,\eta\rangle
-\langle\xi,\Bar{\Delta}^E\eta\rangle
-\operatorname{div}^g(\mathrm{tr}_gK) \langle\xi, \eta\rangle , 
\end{equation}
and Green's formula implies \eqref{lapfor1}. 
The calculation is as follows. 
\begin{eqnarray}
\operatorname{div}^g X
&=&\sum_{i=1}^m g(\nabla^g_{e_i}X,e_i) \nonumber\\
&=&\sum_{i=1}^m e_i\langle\nabla_{e_i}^E\xi,\eta\rangle+\sum_{i,j=1}^m\langle\nabla_{e_j}^E\xi,\eta\rangle g(\nabla^g_{e_i}e_j,e_i) \nonumber\\
&=&\sum_{i=1}^m
\{
\langle\nabla^E_{e_i}\nabla_{e_i}^E\xi,\eta\rangle
+\langle\nabla_{e_i}^E\xi,\overline{\nabla}^{\substack{\scalebox{0.3}{\phantom{i}}\\E}}_{e_i}\eta\rangle
\} \nonumber\\
&& \quad 
-\sum_{i,j=1}^m
\sum_{i,j=1}^m\langle\nabla_{e_j}^E\xi,\eta\rangle g(\nabla^g_{e_i}e_i,e_j) \nonumber\\
&=&\sum_{i=1}^m
\{
\langle\nabla^E_{e_i}\nabla_{e_i}^E\xi,\eta\rangle
+\langle\nabla_{e_i}^E\xi,\overline{\nabla}^{\substack{\scalebox{0.3}{\phantom{i}}\\E}}_{e_i}\eta\rangle
\}  
- \sum_{i=1}^m\langle\nabla_{\nabla^g_{e_i}e_i}^E\xi,\eta\rangle \nonumber\\
&=&\sum_{i=1}^m
\{
\langle\nabla^E_{e_i}\nabla_{e_i}^E\xi,\eta\rangle
+\langle\nabla_{e_i}^E\xi,\overline{\nabla}^{\substack{\scalebox{0.3}{\phantom{i}}\\E}}_{e_i}\eta\rangle
\} \nonumber\\
&& \quad 
- \sum_{i=1}^m\langle\nabla_{\nabla_{e_i}e_i}^E\xi,\eta\rangle
+\langle\nabla_{\mathrm{tr}_gK}^E\xi,\eta\rangle  \nonumber \\
&=&
\langle \Delta^E\xi,\eta\rangle+\langle\nabla^E_{\mathrm{tr}_gK}\xi,\eta\rangle
+\sum_{i=1}^m\langle\nabla_{e_i}^E\xi,\overline{\nabla}^{\substack{\scalebox{0.3}{\phantom{i}}\\E}}_{e_i}\eta\rangle. 
\label{divX}
\end{eqnarray}
    
Similarly, we have
    \begin{equation}\label{divY}    
    \operatorname{div}^gY
    =\langle\xi, 
    \Bar{\Delta}^E\eta\rangle-\langle\xi,\overline{\nabla}^{\substack{\scalebox{0.3}{\phantom{i}}\\E}}_{\mathrm{tr}_gK}\eta\rangle
    +\sum_{i=1}^m\langle\nabla^E_{e_i}\xi,\overline{\nabla}^{\substack{\scalebox{0.3}{\phantom{i}}\\E}}_{e_i}\eta\rangle.
    \end{equation}
    
Lastly, by \eqref{divfor} we have 
\begin{equation}\label{divZ}
\begin{split}
\operatorname{div}^gZ
&=\mathrm{tr}_gK\langle\xi,\eta\rangle
+\langle\xi,\eta\rangle\operatorname{div}^g(\mathrm{tr}_gK)\\
&=\langle\nabla^E_{\mathrm{tr}_gK}\xi,\eta\rangle
    +\langle\xi,\overline{\nabla}^{\substack{\scalebox{0.3}{\phantom{i}}\\E}}_{\mathrm{tr}_gK}\eta\rangle
    +\langle\xi,\eta\rangle\operatorname{div}^g(\mathrm{tr}_gK).
\end{split}
\end{equation}
By combining $(\ref{divX})$, $(\ref{divY})$, and $(\ref{divZ})$, 
we obtain the desired $(\ref{lapfor0})$.
\end{proof}

\vspace{0.5\baselineskip}

Let $E$ be a vector bundle over a statistical manifold $(M, g, \nabla)$ 
with a fiber metric $\langle \cdot , \cdot \rangle$. 
For a connection $\nabla^E: \Gamma(E) \to \Gamma(E \otimes T^*M)$, 
the {\sl adjoint}\/  $(\nabla^E)^* :  \Gamma(E \otimes T^*M) \to \Gamma(E)$  is defined by
\begin{equation*}
    (\nabla^E)^*\mathcal{E}
    =-\sum_{i=1}^m\left\{ \overline{\nabla}^{\substack{\scalebox{0.3}{\phantom{i}}\\E}}_{e_i}(\mathcal{E}(e_i))-\mathcal{E}(\overline{\nabla}_{e_i}{e_i})\right\} 
\end{equation*}
for $\mathcal{E}\in\Gamma(E \otimes T^*M)$. 

We remark that the fiber metric of $E \otimes T^*M$ 
is naturally derived from $g$ and $\langle \cdot, \cdot \rangle$, 
and we denote it by $\langle \cdot, \cdot \rangle$ again. 
Using it, we write the adjointness of $\nabla^E$ and $(\nabla^E)^*$ as follows. 

\vspace{0.5\baselineskip}

\begin{proposition}[\cite{MR3422914}]\label{adjoint-ness}
Let $(M,g,\nabla), E, \langle \cdot, \cdot \rangle, \nabla^E$ be as above.  

{\rm (1)} \ 
Suppose that $M$ is compact and there is a $\nabla$-parallel volume form $\theta$ on $M$. 
The formula  
    \begin{equation*}     
    \int_{M}\langle\mathcal{E},\nabla^E\eta\rangle\theta 
    = \int_{M}\langle(\nabla^E)^*\mathcal{E},\eta\rangle\theta
    \end{equation*}
holds for any $\eta\in\Gamma(E)$ and $\mathcal{E}\in\Gamma(E\otimes T^*M)$. 

{\rm (2)} \ 
We have 
\begin{equation*}
    \Delta^E\xi=-(\overline{\nabla}^{\substack{\scalebox{0.3}{\phantom{i}}\\E}})^*\nabla^E\xi,\quad\xi\in\Gamma(E).
\end{equation*}    
\end{proposition}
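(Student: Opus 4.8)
The plan is to reduce part {\rm (1)} to Green's formula by realizing the difference of the two integrands as a single $\nabla$-divergence, and to deduce part {\rm (2)} by directly unwinding the definitions. For part {\rm (1)} I would fix a local $g$-orthonormal frame $\{e_i\}$ and introduce the vector field $V\in\Gamma(TM)$ determined by $g(V,X)=\langle\mathcal{E}(X),\eta\rangle$, that is, $V=\sum_i\langle\mathcal{E}(e_i),\eta\rangle e_i$. With respect to the induced fiber metric on $E\otimes T^*M$ one has $\langle\mathcal{E},\nabla^E\eta\rangle=\sum_i\langle\mathcal{E}(e_i),\nabla^E_{e_i}\eta\rangle$, while the definition of the adjoint gives $\langle(\nabla^E)^*\mathcal{E},\eta\rangle=-\sum_i\langle\overline{\nabla}^E_{e_i}(\mathcal{E}(e_i))-\mathcal{E}(\overline{\nabla}_{e_i}e_i),\eta\rangle$, where $\overline{\nabla}^E$ denotes the conjugate connection on $E$. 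The point is then to identify the difference of these two expressions with $\operatorname{div}^\nabla V$.

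To carry this out I would invoke the two conjugate-connection identities at once. Writing the defining relation of $\overline{\nabla}^E$ in its equivalent form $X\langle\xi,\eta\rangle=\langle\overline{\nabla}^E_X\xi,\eta\rangle+\langle\xi,\nabla^E_X\eta\rangle$, applying it with $\xi=\mathcal{E}(e_i)$ and $X=e_i$ and summing, the terms $\sum_i\langle\mathcal{E}(e_i),\nabla^E_{e_i}\eta\rangle$ and $\sum_i\langle\overline{\nabla}^E_{e_i}(\mathcal{E}(e_i)),\eta\rangle$ collapse to $\sum_i e_i\langle\mathcal{E}(e_i),\eta\rangle=\sum_i e_i\,g(V,e_i)$. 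Thus the difference equals $\sum_i e_i\,g(V,e_i)-\sum_i g(V,\overline{\nabla}_{e_i}e_i)$. Now applying the defining relation of $\overline{\nabla}$ on $M$, namely $e_i\,g(V,e_i)=g(\nabla_{e_i}V,e_i)+g(V,\overline{\nabla}_{e_i}e_i)$, the $\overline{\nabla}_{e_i}e_i$ contributions cancel and I am left with $\sum_i g(\nabla_{e_i}V,e_i)=\operatorname{div}^\nabla V$. Since $\theta$ is $\nabla$-parallel one has $\operatorname{div}^\theta=\operatorname{div}^\nabla$, so integrating against $\theta$ and applying Green's formula yields the asserted adjointness identity.

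Part {\rm (2)} is then essentially immediate, and I would present it as a formal substitution. The adjoint of $\overline{\nabla}^E$ is built from the conjugates of $\overline{\nabla}^E$ and $\overline{\nabla}$, which, the conjugate of a conjugate being the original, are $\nabla^E$ and $\nabla$. Taking $\mathcal{E}=\nabla^E\xi$, so that $\mathcal{E}(e_i)=\nabla^E_{e_i}\xi$, gives $(\overline{\nabla}^E)^*\nabla^E\xi=-\sum_i\{\nabla^E_{e_i}\nabla^E_{e_i}\xi-\nabla^E_{\nabla_{e_i}e_i}\xi\}$, which is exactly $-\Delta^E\xi$ by the definition of the statistical connection Laplacian.

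The main obstacle is the bookkeeping in part {\rm (1)}: one must use the two conjugate identities in precisely the combination dictated by the adjoint, which is formed from $\overline{\nabla}^E$ and $\overline{\nabla}$ rather than $\nabla^E$ and $\nabla$, and check that both the fiber-metric derivative terms and the frame-connection terms telescope so that exactly $\operatorname{div}^\nabla V$ survives with no residual curvature or torsion contributions. I would also verify that the computation, although performed in an orthonormal frame, is frame-independent, and that the sign conventions in the adjoint are consistent with those in the Laplacian so that the sign in part {\rm (2)} comes out correctly.
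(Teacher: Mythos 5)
Your proof is correct, but there is nothing in the paper to compare it against: the paper states this proposition as a quoted result from \cite{MR3422914} (adding only the remark that its sign convention differs) and gives no proof of its own. Judged on its own terms, your argument holds up. The ``equivalent form'' $X\langle\xi,\eta\rangle=\langle\overline{\nabla}^E_X\xi,\eta\rangle+\langle\xi,\nabla^E_X\eta\rangle$ does follow from the defining relation of $\overline{\nabla}^E$ by swapping $\xi$ and $\eta$ and using symmetry of the fiber metric; with $V$ defined by $g(V,X)=\langle\mathcal{E}(X),\eta\rangle$, your two cancellations give exactly $\operatorname{div}^{\nabla}V=\langle\mathcal{E},\nabla^E\eta\rangle-\langle(\nabla^E)^*\mathcal{E},\eta\rangle$, and since $\nabla\theta=0$ implies $\operatorname{div}^{\theta}=\operatorname{div}^{\nabla}$, Green's formula yields part (1). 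In part (2) you also resolve correctly the one genuine ambiguity in the statement: $(\overline{\nabla}^E)^*$ must be the adjoint formed with respect to the conjugate data, i.e.\ built from $\nabla^E$ on the fibers and $\nabla$ on the base, which is the reading consistent with the paper's convention that barred objects such as $\bar{\Delta}^E$ are always taken with respect to $(g,\overline{\nabla})$ and $\overline{\nabla}^E$ simultaneously; with that reading the substitution $\mathcal{E}=\nabla^E\xi$ returns $-\Delta^E\xi$ on the nose. Finally, note that your divergence-plus-Green's-formula scheme is precisely the technique the paper itself uses to prove Proposition~\ref{lapfor} (auxiliary vector fields, compute a divergence, integrate), so your argument is stylistically at home here; indeed, combining your part (1) computation with part (2) gives an alternative derivation of Proposition~\ref{lapfor}.
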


\vspace{0.5\baselineskip}

We remark that our sign convention is different from \cite{MR3422914} for example. 
Moreover, the statistical connection Laplacian $\Delta^E$ is apart from $\triangle$ 
in \cite{JiangTavakoliZhao} as well. 
 
\vspace{0.5\baselineskip}

\section{The first variation formula of the statistical bi-energy}\label{sec3} 

Let $u:M\to N$ be a smooth map from a compact statistical manifold $(M,g,\nabla^M)$ to a statistical manifold $(N,h,\nabla^N)$,  
and let $\tau_2(u) \in \Gamma(u^{-1}TN)$ be the statistical bi-tension field 
for $u$ given in \eqref{eq:bitension}. 
A smooth variation $F=\{u_t\}_{t\in (-\epsilon, \epsilon)}$ of $u$ 
is a smooth map $F:M\times (-\epsilon, \epsilon) \to N$ 
such that $F(\cdot, 0)=u_0=u$. 
The smooth map $F$ yields a vector field $V\in\Gamma(u^{-1}TN)$ by
\begin{equation*}
    V(x)=\left. \frac{d}{dt} \right|_{t=0} u_t(x), \quad x \in M, 
\end{equation*} 
called the variation vector field of $F$. 
Conversely, it is known that if we take any $V\in\Gamma(u^{-1}TN)$, 
there exists a smooth variation of $u$ that generates $V$.
In this section, we give the first variation formula 
of the statistical bi-energy $E_2$ 
for smooth maps between statistical manifolds, defined in \eqref{eq:bienergy}. 

\vspace{0.5\baselineskip}

\begin{theorem}\label{FVF}
Let $u:M\to N$ be a smooth map from a compact statistical manifold $(M,g,\nabla^M)$ 
to a statistical manifold $(N,h,\nabla^N)$. 
For an arbitrary smooth variation 
$F=\{u_t\}_{t\in (-\epsilon, \epsilon)}$ generating $V$, 
the first variation formula is 
\begin{equation}\label{varfor}
    \left.\displaystyle\frac{d}{dt}\right|_{t=0}E_2(u_t)
    =\int_{M}\left\langle V,\tau_2(u)\right\rangle d\mu_g.
\end{equation}
\end{theorem}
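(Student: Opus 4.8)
The plan is to compute $\frac{d}{dt}\big|_{t=0} E_2(u_t)$ by differentiating under the integral sign and reducing everything to the variation vector field $V$ via integration by parts, in direct analogy with Jiang's classical computation but carefully tracking the two distinct connections $\nabla^M,\nabla^N$ and their conjugates. First I would introduce the extended map $F:M\times(-\epsilon,\epsilon)\to N$ and work on the pullback bundle $F^{-1}TN$, writing $\partial_t = \frac{\partial}{\partial t}$. The key identities I would establish are the commutation relations: since $F_*$ is a well-defined differential and $\nabla^N$ is torsion-free, $\nabla^F_{\partial_t} F_*e_i = \nabla^F_{e_i} F_*\partial_t$, and the $t$-derivative of $\tau(u_t)$ produces curvature terms. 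Concretely, starting from
\begin{equation*}
\left.\frac{d}{dt}\right|_{t=0} E_2(u_t)
= \int_M \left\langle \nabla^F_{\partial_t}\tau(u_t)\big|_{t=0},\, \tau(u)\right\rangle d\mu_g,
\end{equation*}
I would compute $\nabla^F_{\partial_t}\tau(u_t)$ using the definition \eqref{tension} of $\tau$ as a trace, interchanging $\partial_t$-differentiation with the spatial covariant derivatives at the cost of a curvature term $R^{\nabla^N}(F_*\partial_t, F_*e_i)F_*e_i$. This is the step where the tensor $L^N$ and the second-fundamental-type term $K^N$ will enter, since $R^{\nabla^N}$ is not the Levi-Civita curvature and must be rewritten via the curvature interchange tensor $L^N$ defined in the Preliminaries.

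Next I would handle the integration by parts. The heart of the matter is that the naive self-adjointness of the connection Laplacian fails in the statistical setting; instead one must use Proposition \ref{lapfor}, which swaps $\Delta^u$ for $\bar\Delta^u$ at the cost of the extra term $\operatorname{div}^g(\mathrm{tr}_g K^M)$. Applying \eqref{lapfor1} to move the Laplacian off of $V$ and onto $\tau(u)$ is precisely what generates the first two summands $\bar\Delta^u\tau(u) + \operatorname{div}^g(\mathrm{tr}_g K^M)\tau(u)$ appearing in \eqref{eq:bitension}. I expect the bookkeeping here to require care: the Laplacian $\Delta^u$ is built from $(g,\nabla^M)$ and $\nabla^u$, so its adjoint behaviour involves the conjugate connection $\bar\nabla^M$ on the source \emph{and} the conjugate connection on the pullback bundle on the target, and these two conjugations must be kept separate.

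I would then assemble the curvature contributions. Rewriting $\langle R^{\nabla^N}(F_*\partial_t, F_*e_i)F_*e_i, \tau(u)\rangle$ via the defining relation $\langle L^N(Z,W)X,Y\rangle = \langle R^{\nabla^N}(X,Y)Z,W\rangle$ for the curvature interchange tensor pairs $V$ against $L^N(u_*e_i,\tau(u))u_*e_i$, yielding the third term of $\tau_2(u)$. The final term $K^N(\tau(u),\tau(u))$ should emerge from the difference between $\nabla^N$ and its Levi-Civita connection $\nabla^{h}$ when one commutes $\nabla^F_{\partial_t}$ past the metric pairing $\|\cdot\|_h^2$, or equivalently from the failure of $\nabla^N$ to be metric, captured by \eqref{CandK}. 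Throughout, I would use Green's formula to discard total $\operatorname{div}^g$-terms, which is legitimate since $M$ is compact.

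The main obstacle, and where I would spend the most effort, is the integration-by-parts step with the statistical Laplacian: getting the signs and the conjugate-connection placements exactly right so that Proposition \ref{lapfor} applies and produces both the $\bar\Delta^u$ term and the correct $\operatorname{div}^g(\mathrm{tr}_g K^M)$ coefficient. The curvature term manipulation is conceptually the second subtlety, because the statistical curvature $R^{\nabla^N}$ lacks the symmetries of $R^h$, which is exactly why the asymmetric object $L^N$ (rather than $R^{\nabla^N}$ itself) is forced to appear in the Euler–Lagrange equation. Once these two points are settled, collecting terms and recognizing that the variation vector field $V$ factors out of every summand gives \eqref{varfor} with $\tau_2(u)$ as in \eqref{eq:bitension}.
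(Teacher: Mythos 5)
Your outline follows the paper's proof essentially step for step: work on $B=M\times(-\epsilon,\epsilon)$ with the pullback bundle $F^{-1}TN$, commute the $t$-derivative through the trace defining $\tau(u_t)$ at the cost of an $R^N$-curvature term, rewrite that term as $-\sum_{i=1}^m L^N(u_*e_i,\tau(u))u_*e_i$ using the definition and skew-adjointness of the curvature interchange tensor, and apply Proposition \ref{lapfor} to trade $\Delta^{u_t}$ for $\bar{\Delta}^{u_t}+\operatorname{div}^g(\mathrm{tr}_gK^M)$. This is exactly the paper's argument.

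However, one step as you wrote it is wrong and, taken literally, would lose a term of \eqref{eq:bitension}. Your displayed starting identity
\begin{equation*}
\left.\frac{d}{dt}\right|_{t=0}E_2(u_t)
=\int_M\Bigl\langle \left.\nabla^F_{\frac{\partial}{\partial t}}\tau(u_t)\right|_{t=0},\,\tau(u)\Bigr\rangle\, d\mu_g
\end{equation*}
is false, precisely because $\nabla^F$ is induced from $\nabla^N$, which is not $h$-metric. Differentiating $\frac12\|\tau(u_t)\|_h^2$ correctly gives
\begin{equation*}
\frac{d}{dt}E_2(u_t)
=\int_M h\Bigl(\nabla^F_{\frac{\partial}{\partial t}}\tau(u_t),\tau(u_t)\Bigr)\,d\mu_g
+\frac12\int_M\Bigl(\nabla^N_{F_*\frac{\partial}{\partial t}}h\Bigr)\bigl(\tau(u_t),\tau(u_t)\bigr)\,d\mu_g,
\end{equation*}
and by \eqref{CandK}, together with the symmetry and self-adjointness of $K^N$, the second integral equals $-\int_M h\bigl(K^N(\tau(u_t),\tau(u_t)),F_*\frac{\partial}{\partial t}\bigr)\,d\mu_g$; this, and nothing else, produces the fourth term $-K^N(\tau(u),\tau(u))$ of $\tau_2(u)$. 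You do identify this mechanism in your final paragraph (``the failure of $\nabla^N$ to be metric, captured by \eqref{CandK}''), which is indeed what the paper does, but that remark contradicts both your displayed formula and your earlier claim that $K^N$ enters ``at the curvature-commutation step'' alongside $L^N$ --- the commutation step yields only the $R^N$ (hence $L^N$) contribution. The repair must be made at the very first differentiation under the integral sign, before any commuting or integration by parts; if you start from your display as written, no later manipulation can ever generate the $K^N(\tau,\tau)$ term. With that display corrected, the rest of your plan goes through and reproduces the paper's proof.
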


\begin{proof}
We take the standard Euclidean metric and its Levi-Civita connection 
on $(-\epsilon,\epsilon) \subset \mathbb{R}$. 
On the product statistical manifold $B = M\times (-\epsilon,\epsilon)$, 
we denote the induced statistical connection by $\nabla^B$. 
Identifying $X\in\Gamma(TM)$ 
with $(X,0)\in\Gamma(TB)\simeq\Gamma(TM\oplus T(-\epsilon,\epsilon))$, 
we have 
    \begin{equation}    \label{conF}
        \nabla^B_X Y 
        = \nabla^M_X Y,\quad \nabla^B_X \frac{\partial}{\partial t}
        =\nabla^B_{\frac{\partial}{\partial t}} X=0,\quad
        \nabla^B_{\frac{\partial}{\partial t}}\frac{\partial}{\partial t}=0
    \end{equation}
for any $X,Y\in\Gamma(TM)$.

Let $\nabla^F$ be the connection on $\Gamma(F^{-1}TN)$ 
induced from $\nabla^N$, and set $E=F^{-1}TN\otimes T^*B$. 
We remark that $\nabla^E$ is given by 
    \begin{equation*}
        (\nabla^E_X \Phi)(Y)=\nabla^F_X (\Phi Y)-\Phi\nabla^B_X Y\in\Gamma(F^*TN)
    \end{equation*}
for $X,Y\in\Gamma(TB)$ and $\Phi\in\Gamma(E)$. 
The curvature tensor field $R^E$ of $\nabla^E$ is written by 
\begin{equation}\label{curvtens0}
    R^E(X,Y)\Phi(Z) 
    = \left((\nabla^E)^2\Phi\right)(Z;Y;X)-\left((\nabla^E)^2\Phi\right)(Z;X;Y) 
\end{equation}
for $X,Y,Z\in\Gamma(TB)$ and $\Phi\in\Gamma(E)$. 
Besides, it holds that 
\begin{equation}\label{curvtens}
    R^E(X,Y)dF(Z)=R^N\left(F_*X,F_*Y\right)F_*Z-F_*\left(R^B(X,Y)Z\right)
\end{equation}
for $X,Y,Z\in\Gamma(TB)$, 
where $R^B$ and $R^N$ are the curvature tensor fields of $\nabla^B$ 
and $\nabla^N$, respectively. 

Using \eqref{CandK}, we have 
\begin{equation*}
\begin{split}
    \displaystyle\frac{d}{dt}E_2(u_t)
    &=\displaystyle
    \frac{1}{2} \int_{M}\frac{\partial}{\partial t} ||\tau(u_t)||_h^2 d\mu_g\\
    &=\int_{M}h\left(\nabla^F_{\frac{\partial}{\partial t}}\tau(u_t),\tau(u_t)\right)d\mu_g
    +\displaystyle\frac{1}{2}\int_{M}(\nabla^N_{F_*\frac{\partial}{\partial t}}h)\Bigl(\tau(u_t),\tau(u_t)\Bigr)d\mu_g\\
    &=\int_{M}h\left(\nabla^F_{\frac{\partial}{\partial t}}\tau(u_t),\tau(u_t)\right)d\mu_g
    -\int_{M}h\left(K^N\bigl(\tau(u_t),\tau(u_t)\bigr),F_*\frac{\partial}{\partial t}\right)d\mu_g.
\end{split}
\end{equation*}
Taking an orthonormal basis $\{ e_i \}_{i=1, \ldots, m}$ with respect to $g$, 
we have, by \eqref{curvtens0} and by $(\ref{curvtens})$ and $(\ref{conF})$,
\begin{equation}\label{covtau}
    \begin{split}
        \nabla^F_{\frac{\partial}{\partial t}}\tau(u_t)
        &=\sum_{i=1}^m\nabla^F_{\frac{\partial}{\partial t}}
        \left((\nabla^E_{e_i}dF)(e_i)\right)\\
        &=\sum_{i=1}^m\left((\nabla^E)^2dF\right)
        \left(e_i;e_i;\frac{\partial}{\partial t}\right)\\
        &=\sum_{i=1}^m\left((\nabla^E\right)^2dF)
        \left(e_i;\frac{\partial}{\partial t};e_i\right)
        +\sum_{i=1}^m\left(R^E\left(\frac{\partial}{\partial t},e_i\right)dF\right)(e_i) \\
        &=\sum_{i=1}^m\left((\nabla^E\right)^2dF)
        \left(e_i;\frac{\partial}{\partial t};e_i\right)
    +\sum_{i=1}^m{R^N}\left(F_*\frac{\partial}{\partial t},F_*(e_i)\right)F_*(e_i) \\
        &=\Delta^{u_t}\left(F_*\frac{\partial}{\partial t}\right)
        +\sum_{i=1}^m{R^N}\left(F_*\frac{\partial}{\partial t}, 
        F_*(e_i)\right)F_*(e_i).
    \end{split}
\end{equation}
Indeed, the last equality is obtained 
by $(\ref{conF})$ as follows:  
\begin{equation*}
    \begin{split}
        \sum_{i=1}^m\left((\nabla^E)^2dF\right)
        \left(e_i;\frac{\partial}{\partial t};e_i\right)
        &=\sum_{i=1}^m\left(\nabla^F_{e_i}
        \left((\nabla^E_{\frac{\partial}{\partial t}}dF)(e_i)\right)
        -(\nabla^E_{\frac{\partial}{\partial t}}dF)
        (\nabla^B_{e_i}{e_i})\right)\\
&=\sum_{i=1}^m\left(\nabla^{u_t}_{e_i}\nabla^{u_t}_{e_i}
\left(F_*\frac{\partial}{\partial t}\right)
-\nabla^{u_t}_{\nabla^M_{e_i}{e_i}}
\left(F_*\frac{\partial}{\partial t}\right)\right)\\
        &=\Delta^{u_t}\left(F_*\frac{\partial}{\partial t}\right) . 
    \end{split}
\end{equation*}

By Proposition $\ref{lapfor}$, we have 
\begin{equation*}
    \begin{split}
    &\int_{M}h\left(\Delta^{u_t}
    \left(F_*\frac{\partial}{\partial t}\right),\tau(u_t)\right)d\mu_g\\
    &=\int_{M}h\left(F_*\frac{\partial}{\partial t},\Bar{\Delta}^{u_t}\tau(u_t)
    +\operatorname{div}^g(\mathrm{tr}_gK)\tau(u_t)\right)d\mu_g.
    \end{split}
\end{equation*}
By combining them and remembering 
$
V=\left.F_*\frac{\partial}{\partial t}\right|_{t=0}
$, 
we have the first variation formula: 
\begin{eqnarray*}
\left.\displaystyle\frac{d}{dt}\right|_{t=0}E_2(u_t) 
&=&\int_{M} \langle \, V, \,  \bar{\Delta}^u\tau(u)+\operatorname{div}^g
(\mathrm{tr}_g K^M)\tau(u)  \\
&& \qquad \qquad 
-\sum_{i=1}^mL^N(u_*e_i,\tau(u)) u_*e_i 
-K^N(\tau(u),\tau(u) ) \,     \rangle d\mu_g.
\end{eqnarray*}
\end{proof}

\vspace{0.5\baselineskip}

\section{Examples of statistical biharmonic maps}\label{sec4} 

As stated in \S 1, we call a map $u$ statistical biharmonic 
if the statistical bi-tension field of $u$ vanishes identically. 
Under suitable assumptions, we will rewrite the statistical bi-tension field.  

\vspace{0.5\baselineskip}

Let $u: (M,g, \nabla^M) \to (N, h, \nabla^N)$ be a smooth map 
between statistical manifolds. 
Suppose that $M$ is trace-free and $N$ is conjugate symmetric. 
Then the statistical bi-tension field of $u$ is given as 
\begin{equation}\label{bt41}
\tau_2(u)
=\bar{\Delta}^u \tau(u) 
-\sum_{i=1}^mR^N(u_*e_i, \tau(u))u_*e_i-K^N(\tau(u), \tau(u)),  
\end{equation}
where $\{e_j\}_{j=1, \ldots, m}$ is an orthonormal basis 
with respect to $g$. 

\vspace{0.5\baselineskip}

\begin{example}
(1) \ Let $c: (M,g, \nabla^M) \to (N, h, \nabla^N)$ be a smooth map 
between statistical manifolds. 
Suppose that $(M, g, \nabla^M)=\left((a,b), g_0, \nabla^{g_0}\right)$, 
where $g_0$ is the Euclidean metric on $(a,b)\subset\mathbb{R}$. 
Since $\tau(c)=\nabla^N_{\dot{c}}\dot{c}$, where $\dot{c}=c_*\dfrac{d}{dt}$,  
the statistical bi-tension field of $c$ is given as 
    \begin{equation*}
    \tau_2(c)
    =\overline{\nabla}^{\substack{\scalebox{0.3}{\phantom{i}}\\N}}_{\dot{c}}\overline{\nabla}^{\substack{\scalebox{0.3}{\phantom{i}}\\N}}_{\dot{c}}\nabla^N_{\dot{c}}\dot{c}
    -L^N(\dot{c},\nabla^N_{\dot{c}}\dot{c})\dot{c}
    -K^N\left(\nabla^N_{\dot{c}}\dot{c},\nabla^N_{\dot{c}}\dot{c}\right). 
    \end{equation*} 
Accordingly, any $\nabla^N$-geodesic $c$ is a statistical biharmonic map. 

 (2) \ 
Take the statistical manifold $(\mathbb{R}^2,g_0,\nabla)$ 
in Example $\ref{geost}$ as $(N, h, \nabla^N)$ in (1).  
For any real numbers $a,b,c$, and $d$, 
the curve $c(t)=(at+c,bt+d)\in\mathbb{R}^2$, $t\in\mathbb{R}$,  
satisfies $\tau_2(c)=0$. 
Here, if $a\neq0$ or $b\neq0$, then $\tau(c)\neq0$.
\end{example}

\vspace{0.5\baselineskip}

\begin{example}
(1) \     
Let $f: (M,g, \nabla^M) \to (N, h, \nabla^N)$ be a smooth map 
between statistical manifolds. 
Suppose that $(N, h, \nabla^N)=(\mathbb{R}, g_0, \nabla^{g_0})$.  
Since $\tau(f)=\Delta f$,  the statistical bi-tension field of $f$ 
is given as 
\begin{equation*}
        \tau_2(f)=\bar{\Delta}\Delta f
        +\operatorname{div}^g(\mathrm{tr}_gK)\Delta f, 
\end{equation*}
where $\Delta=\Delta^{(g,\nabla,\nabla^{g_0})}$ and $\bar{\Delta}=\Delta^{(g,\overline{\nabla},\nabla^{g_0})}$.

(2) \     
If we take the statistical manifold $(\mathbb{R}^2,g_0,\nabla)$ in Example $\ref{geost}$, 
we have 
    \begin{equation*}
        \tau_2(f)
        =\bar{\Delta}\Delta f
        =\left\{\left(\frac{\partial^2}{\partial x^2}
        +\frac{\partial^2}{\partial y^2}\right)^2
        -\left(\frac{\partial}{\partial x}
        +\frac{\partial}{\partial y}\right)^2\right\}f. 
    \end{equation*}
The function $f(x,y)=\sinh{x}+\cosh{y},~(x,y)\in\mathbb{R}^2$, 
satisfies $\tau_2(f)=0$, while $\tau(f)$ is not globally equal to $0$. 
\end{example}

\vspace{0.5\baselineskip}

\begin{lemma}
\label{Lapcon}
Suppose that $(M,g, \nabla)$ is a compact statistical manifold with $\operatorname{div}^g(\mathrm{tr}_gK)=0$. 
For $f\in C^{\infty}(M)$,  if $\Delta f$ is constant, 
then so is $f$. 
\end{lemma}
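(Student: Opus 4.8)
The plan is to argue in two stages: first show that the constant value of $\Delta f$ is forced to be $0$, and then deduce $f\equiv\mathrm{const}$ from $\Delta f=0$ by a standard energy identity. The hypothesis $\operatorname{div}^g(\mathrm{tr}_g K)=0$ will enter at both stages, in each case to annihilate the first-order drift term that the statistical structure introduces into the Laplacian.

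First I would invoke the formula \eqref{Lapf}, namely $\Delta f=\Delta_g f-(\mathrm{tr}_g K)f$, and integrate it over the compact manifold $M$ against $d\mu_g$. The term $\int_M\Delta_g f\,d\mu_g$ vanishes, since $\Delta_g f=\operatorname{div}^g(\mathrm{grad}_g f)$ and Green's formula applies. For the drift term I would use \eqref{divfor} to write $(\mathrm{tr}_g K)f=\operatorname{div}^g(f\,\mathrm{tr}_g K)-f\,\operatorname{div}^g(\mathrm{tr}_g K)$; here the first summand integrates to $0$ by Green's formula, while the second is identically $0$ by hypothesis. Hence $\int_M\Delta f\,d\mu_g=0$, and since $\Delta f$ is assumed constant this forces $\Delta f\equiv 0$, that is, $\Delta_g f=(\mathrm{tr}_g K)f$.

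In the second stage I would multiply this identity by $f$ and integrate. On the left, ordinary Riemannian integration by parts gives $\int_M f\,\Delta_g f\,d\mu_g=-\int_M\|\mathrm{grad}_g f\|_g^2\,d\mu_g$. On the right, writing $f\,(\mathrm{tr}_g K)f=\tfrac12(\mathrm{tr}_g K)(f^2)=\tfrac12\bigl(\operatorname{div}^g(f^2\,\mathrm{tr}_g K)-f^2\operatorname{div}^g(\mathrm{tr}_g K)\bigr)$ via \eqref{divfor}, both terms integrate to $0$, once again by Green's formula together with the hypothesis. We therefore obtain $\int_M\|\mathrm{grad}_g f\|_g^2\,d\mu_g=0$, whence $\mathrm{grad}_g f\equiv 0$ and $f$ is constant.

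The computations are routine once the reduction $\Delta f\equiv 0$ is available; the only point requiring genuine care is the handling of the non-self-adjoint drift term $(\mathrm{tr}_g K)f$. It is precisely the assumption $\operatorname{div}^g(\mathrm{tr}_g K)=0$ that makes this term integrate to zero at both stages---first forcing the constant to vanish, then killing the cross term in the energy identity---so this is where the real content of the lemma lies, and without it neither reduction goes through.
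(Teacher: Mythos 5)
Your proof is correct, and its second stage (multiplying $\Delta_g f=(\mathrm{tr}_g K)f$ by $f$ and integrating by parts, with the hypothesis killing the cross term) is essentially identical to the paper's. Where you genuinely differ is in the first stage. The paper applies Green's formula to the auxiliary vector field $X=\Delta f\,\mathrm{grad}_g f$: using \eqref{Lapf}, \eqref{divfor}, the constancy of $\Delta f$, and $\operatorname{div}^g(\mathrm{tr}_g K)=0$, it derives
\begin{equation*}
\operatorname{div}^g X=(\Delta f)^2+\operatorname{div}^g\bigl(f\,\Delta f\,\mathrm{tr}_g K\bigr),
\end{equation*}
whence $\int_M(\Delta f)^2\,d\mu_g=0$ and so $\Delta f\equiv 0$ pointwise. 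You instead integrate the identity \eqref{Lapf} directly, observing that under the hypothesis the statistical Laplacian has the mean-zero property $\int_M\Delta f\,d\mu_g=0$ for \emph{every} $f\in C^\infty(M)$, so a constant $\Delta f$ must vanish because $\mathrm{vol}(M)>0$. Both routes use the constancy of $\Delta f$ and the hypothesis in an essential way, so neither is more general in substance; but yours is the more elementary, avoids the auxiliary vector field, and isolates a reusable fact (the mean-zero property of $\Delta$ when $\operatorname{div}^g(\mathrm{tr}_g K)=0$), whereas the paper's quadratic identity reaches $\Delta f\equiv 0$ as an $L^2$ statement without appealing to positivity of the total volume. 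Either way the reduction to $\Delta f\equiv 0$ is sound, and the remainder of your argument matches the paper's.
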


\begin{proof}
We put $X\in\Gamma(TM)$ as $X=\Delta f \, \mathrm{grad}_gf$. 
By \eqref{Lapf} and \eqref{divfor}, 
we have
    \begin{equation*}
        \begin{split}
            \operatorname{div}^gX&=\Delta f\Delta_g f\\
            &=(\Delta f)^2+\Delta f\mathrm{tr}_gKf\\
            &=(\Delta f)^2+\operatorname{div}^g(f\Delta f\mathrm{tr}_gK).
        \end{split}
    \end{equation*}
By Green's formula, we have
    \begin{equation*}
        0=\int_{M}(\Delta f)^2 d\mu_g,
    \end{equation*}
thus we obtain $\Delta f=0$. 

We remark that 
$
\Delta_g f^2 = 2 f \Delta_g f+2\| df \|_g^2
$. 
By \eqref{Lapf} and Green's formula again, we have
    \begin{equation*}
        \begin{split}
            0&=\int_{M}f\Delta fd\mu_g\\
            &=\int_{M}f\Delta_g fd\mu_g-\int_{M}f\mathrm{tr}_gKfd\mu_g\\
            &=-\int_{M}\|df\|_g^2 d\mu_g - \frac{1}{2}\int_M\mathrm{tr}_gKf^2d\mu_g\\
            &=-\int_{M}\|df\|_g^2 d\mu_g,
        \end{split}
    \end{equation*}
    therefore $df=0$ on $M$.
\end{proof}

\begin{proposition}
Suppose that $(M,g, \nabla)$ is a compact statistical manifold 
with  $\operatorname{div}^g(\mathrm{tr}_gK)=0$.  
A function $f: (M,g,\nabla) \to (\mathbb{R}, g_0, \nabla^{g_0})$ 
is statistical biharmonic if and only if $f$ is constant.
\end{proposition}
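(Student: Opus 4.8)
The plan is to reduce the statement to an $L^2$-identity obtained from the adjointness already packaged in Proposition \ref{lapfor}, and then to invoke Lemma \ref{Lapcon}. The sufficiency is immediate: if $f$ is constant, then $\Delta f=0$ by \eqref{Lapf}, and since the target is $(\mathbb{R},g_0,\nabla^{g_0})$ the bi-tension field computed above reads $\tau_2(f)=\bar\Delta\Delta f+\operatorname{div}^g(\mathrm{tr}_gK)\Delta f=0$. So the content is in the necessity.

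For the necessity, I would first use the hypothesis $\operatorname{div}^g(\mathrm{tr}_gK)=0$ to collapse the bi-tension field to $\tau_2(f)=\bar\Delta\Delta f$, so that statistical biharmonicity of $f$ is equivalent to $\bar\Delta\Delta f=0$. Next, I would specialize Proposition \ref{lapfor} to the trivial bundle $E=M\times\mathbb{R}$; since the correction term carries the factor $\operatorname{div}^g(\mathrm{tr}_gK)$, the same hypothesis makes it vanish, leaving the clean statement that $\Delta$ and $\bar\Delta$ are formal $L^2$-adjoints, namely $\int_M (\Delta\xi)\eta\,d\mu_g=\int_M\xi\,(\bar\Delta\eta)\,d\mu_g$ for all $\xi,\eta\in C^\infty(M)$. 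Pairing the equation $\bar\Delta\Delta f=0$ against $f$ and applying this adjointness with $\xi=f$ and $\eta=\Delta f$ gives
$$
0=\int_M f\,\bar\Delta(\Delta f)\,d\mu_g=\int_M (\Delta f)^2\,d\mu_g,
$$
whence $\Delta f=0$ on $M$.

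Finally, since $0$ is in particular constant, Lemma \ref{Lapcon} applies verbatim under the standing compactness and $\operatorname{div}^g(\mathrm{tr}_gK)=0$ assumptions and yields that $f$ is constant, completing the equivalence. The argument is short, and I do not expect a genuine obstacle beyond bookkeeping; the one point deserving care is confirming that the hypothesis $\operatorname{div}^g(\mathrm{tr}_gK)=0$ is precisely what eliminates \emph{both} the extra term in the expression for $\tau_2(f)$ and the correction term in Proposition \ref{lapfor}, so that the adjointness exploited above is literal rather than merely up to a lower-order contribution.
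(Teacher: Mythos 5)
Your proof is correct, and it diverges from the paper's at the key integration step in a small but genuine way. The paper also reduces the problem to $\bar{\Delta}\Delta f=0$ and finishes with Lemma \ref{Lapcon}, but instead of invoking Proposition \ref{lapfor} it pairs the equation against $\Delta f$ and integrates by parts by hand: writing $\bar{\Delta}\Delta f=\Delta_g\Delta f+\mathrm{tr}_gK\Delta f$, using $(\Delta_g\Delta f)\,\Delta f=-\|d\Delta f\|_g^2+\tfrac{1}{2}\Delta_g(\Delta f)^2$ and $(\mathrm{tr}_gK\,\Delta f)\,\Delta f=\tfrac{1}{2}\mathrm{tr}_gK\,(\Delta f)^2$, and killing both exact terms by Green's formula (the second precisely because $\operatorname{div}^g(\mathrm{tr}_gK)=0$), which gives $\int_M\|d\Delta f\|_g^2\,d\mu_g=0$, i.e., only that $\Delta f$ is \emph{constant}. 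You instead pair against $f$ and use the packaged $L^2$-adjointness of Proposition \ref{lapfor} on the trivial bundle (whose correction term vanishes under the same hypothesis, as you rightly flag), which yields the stronger conclusion $\Delta f=0$ in one line. The two routes then merge at Lemma \ref{Lapcon}; note that the first half of that lemma's own proof is devoted to showing that ``$\Delta f$ constant'' forces $\Delta f=0$, which is exactly the step your pairing renders unnecessary. So your argument is marginally more economical, reusing established machinery rather than redoing an integration by parts, while the paper's is self-contained at that point; both exploit the hypothesis $\operatorname{div}^g(\mathrm{tr}_gK)=0$ in the same two places (collapsing $\tau_2(f)$ to $\bar{\Delta}\Delta f$, and cleaning up the integral identity), and your handling of the trivial sufficiency direction is also fine.
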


\begin{proof}
Since $\operatorname{div}^g(\mathrm{tr}_gK)=0$, we have
    \begin{equation*}
    \tau_2(f)=\bar{\Delta}\Delta f=\Delta_g\Delta f+\mathrm{tr}_gK\Delta f.
    \end{equation*}
By integration, we obtain
    \begin{equation*}
    \begin{split}
        0
        &=\int_M (\Delta_g\Delta f+\mathrm{tr}_gK\Delta f)\Delta f d\mu_g\\
        &=\int_{M} \left\{ 
        -\|d\Delta f\|_g^2+\frac{1}{2} \Delta_g(\Delta f)^2 
        +\frac{1}{2} \mathrm{tr}_g K (\Delta f)^2
        \right\} d\mu_g\\
        &=-\int_{M} \|d\Delta f\|_g^2d\mu_g,
    \end{split}
    \end{equation*}
hence, $\Delta f$ is constant. 
From Lemma $\ref{Lapcon}$, we conclude that $f$ is a constant function.
\end{proof}

\vspace{0.5\baselineskip}

\begin{theorem}    \label{affineimmersion}
Let $f: M \to \mathbb{R}^{m+1}$ be a locally strongly convex hypersurface 
with affine shape operator $S$,  
and $(h, \nabla)$ the statistical structure equiaffinely induced by $f$ 
as in Example \ref{Equiaffine Geometry}. 
Consider $f$ as a map from a statistical manifold $(M,h,\nabla)$ 
to a statistical manifold $(\mathbb{R}^{m+1},g_0, \nabla^{g_0})$ 
for the Euclidean metric $g_0$ 
compatible with the equiaffine structure of $\mathbb{R}^{m+1}$. 
Then, $\tau(f)$ does not vanish. 
Moreover, $f$ is statistical biharmonic if and only if 
(1) $\mathrm{tr}\, S=0$ and (2) $\mathrm{tr}_h \nabla S=0$ hold. 

In particular, 
an improper affine hypersphere is a statistical biharmonic map. 
\end{theorem}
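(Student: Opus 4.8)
The plan is to read everything off the Blaschke structure equations recorded in Example~\ref{Equiaffine Geometry} and then feed the result into the reduced bi-tension formula~\eqref{bt41}. Write $D$ for the ambient flat connection $=\nabla^{g_0}$ and recall the Gauss and Weingarten equations $D_X f_*Y = f_*\nabla_X Y + h(X,Y)\xi$ and $D_X\xi = -f_*SX$, the transversal component in the latter vanishing by Example~\ref{Equiaffine Geometry}. Since the pullback connection satisfies $\nabla^f_X(f_*Y)=D_{f_*X}(f_*Y)$, the bracketed expression in the definition~\eqref{tension} of the tension field equals $\nabla^f_X f_*Y - f_*\nabla_X Y = h(X,Y)\xi$, whence, tracing against an $h$-orthonormal frame $\{e_i\}$,
\begin{equation*}
\tau(f)=\sum_{i=1}^m h(e_i,e_i)\,\xi = m\,\xi .
\end{equation*}
As $\xi$ is a nowhere-vanishing transversal field, $\tau(f)\neq 0$; this settles the first assertion.

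Next I would reduce $\tau_2(f)$. The source $(M,h,\nabla)$ is trace-free, since Example~\ref{Equiaffine Geometry} records the apolarity condition $\mathrm{tr}_hK=0$, and the target $(\mathbb{R}^{m+1},g_0,\nabla^{g_0})$ is Riemannian, hence conjugate symmetric and moreover flat with $K^N=0$ and $R^N=0$. Thus the two non-Laplacian terms of~\eqref{bt41} drop out and
\begin{equation*}
\tau_2(f)=\bar{\Delta}^f\tau(f)=m\,\bar{\Delta}^f\xi,
\end{equation*}
where, the target being Riemannian, the induced conjugate connection on $f^{-1}T\mathbb{R}^{m+1}$ coincides with $\nabla^f$, so that $\bar{\Delta}^f\xi = \mathrm{tr}_h\{(X,Y)\mapsto \nabla^f_X\nabla^f_Y\xi - \nabla^f_{\bar\nabla_X Y}\xi\}$.

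The computational heart is to evaluate $\bar{\Delta}^f\xi$. Differentiating $\nabla^f_Y\xi=-f_*SY$ once more with the Gauss equation and expanding $\nabla_X(SY)=(\nabla_X S)Y+S(\nabla_X Y)$ gives
\begin{equation*}
\nabla^f_X\nabla^f_Y\xi - \nabla^f_{\bar\nabla_X Y}\xi
= -f_*(\nabla_X S)Y - f_*S\bigl(\nabla_X Y-\bar\nabla_X Y\bigr) - h(X,SY)\,\xi .
\end{equation*}
Since $\nabla_X Y-\bar\nabla_X Y=2K(X,Y)$ (combine the definition of $K$ with the conjugate-connection formula in the Remark) and $\mathrm{tr}_hK=0$, the middle term dies upon tracing, leaving
\begin{equation*}
\bar{\Delta}^f\xi = -f_*\bigl(\mathrm{tr}_h\nabla S\bigr) - (\mathrm{tr}\,S)\,\xi,
\qquad\text{hence}\qquad
\tau_2(f) = -m\,f_*\bigl(\mathrm{tr}_h\nabla S\bigr) - m(\mathrm{tr}\,S)\,\xi .
\end{equation*}
Because $f_*(T_xM)$ and $\mathbb{R}\xi_x$ are complementary at every point, $\tau_2(f)=0$ decouples into the tangential condition $\mathrm{tr}_h\nabla S=0$ and the normal condition $\mathrm{tr}\,S=0$, which are precisely (1) and (2). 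An improper affine hypersphere has $S=0$, so both hold trivially and $f$ is statistical biharmonic.

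The only genuinely delicate point I anticipate is the bookkeeping in the penultimate display: one must respect that $\bar{\Delta}^f$ uses the conjugate connection $\bar\nabla$ on the base $M$ (producing the term $\nabla^f_{\bar\nabla_X Y}\xi$), so that the relevant difference is $\nabla_X Y-\bar\nabla_X Y=2K(X,Y)$ rather than $K(X,Y)$, and it is exactly the apolarity $\mathrm{tr}_hK=0$ of the induced structure that annihilates this term; without apolarity a spurious $-2f_*S(\mathrm{tr}_hK)$ would survive. Checking that the bracketed expression is genuinely tensorial in $X,Y$, so that tracing is legitimate, is routine.
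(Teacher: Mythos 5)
Your proposal is correct and follows essentially the same route as the paper's own proof: compute $\tau(f)=m\xi$ from the Gauss equation, reduce $\tau_2(f)$ to $m\,\bar{\Delta}^f\xi$ via apolarity and the flatness of $(\mathbb{R}^{m+1},g_0,\nabla^{g_0})$, evaluate $\bar{\Delta}^f\xi$ with the Gauss--Weingarten equations, and split the result into tangential and transversal parts. Your explicit handling of the $\nabla_XY-\overline{\nabla}_XY=2K(X,Y)$ term is in fact slightly more careful than the paper, which passes from $f_*S\overline{\nabla}_{e_i}e_i$ to $f_*S\nabla_{e_i}e_i$ with the apolarity condition used only implicitly under the trace.
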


\vspace{0.5\baselineskip}
 
\begin{proof}
Let denote $\nabla^{g_0}$ by $D$ for short.   
For $f:(M,h,\nabla)\to (\mathbb{R}^{m+1},g_0,D)$, 
\begin{equation*}
        \tau(f)
        =\mathrm{tr}_h\{(X,Y)\mapsto D_Xf_*Y-f_*\nabla_XY\}=\mathrm{tr}_h\{(X,Y)\mapsto h(X,Y)\xi \}
        =m\xi, 
\end{equation*}
which does not vanish. 
On the other hand, since $\mathrm{tr}_hK$ vanishes, 
by \eqref{bt41} the statistical bi-tension field $\tau_2(f)$ of $f$ is obtained as 
\begin{equation*}
\begin{split}
    \frac{1}{m}\tau_2(f)
    &=\sum_{i=1}^m
    \left(D_{e_i}D_{e_i}\xi-D_{\overline{\nabla}_{e_i}e_i}\xi\right) \\   
    &=\sum_{i=1}^m
    \left(-D_{e_i}f_*Se_i+f_*S\overline{\nabla}_{e_i}e_i\right) \\
    &=\sum_{i=1}^m
    \left(-f_*\nabla_{e_i}Se_i-h(e_i,Se_i)\xi+f_*S\nabla_{e_i}e_i\right) \\ 
    &=-f_*\left(\mathrm{tr}_h\nabla S\right)-\mathrm{tr}S\, \xi. 
\end{split}
\end{equation*}
Therefore, $\tau_2(f)$ vanishes if and only if (1) and (2) hold. 
\end{proof}

\vspace{0.5\baselineskip}

\section{Reduction properties of statistical biharmonic maps}\label{sec5} 

Let $(M,g,\nabla^M)$, $(N,h,\nabla^N)$ be statistical manifolds, 
and $u:M\to N$ a smooth map with $\tau(u) \in \Gamma(u^{-1}TN)$,  
which is defined for $u$ by using $(g, \nabla^M)$ and $\nabla^N$  
as in \eqref{tension}.  

Remark that if $(g, \nabla^M)$ and $(h, \nabla^N)$ are Riemannian, 
$\tau(u)$ is called the {\sl tension field}\/ of $u$, 
and a map $u:M\to N$ satisfying $\tau(u)=0$ is classically called a {\sl harmonic map}. 
In the case where $\nabla^M$ is flat, and $(h,\nabla^N)$ is Riemannian, 
a map $u:M\to N$ satisfying $\tau(u)=0$ is studied as an {\sl affine harmonic map}\/ in \cite{Jostcimcir}. 

\vspace{0.5\baselineskip}

\begin{remark}
Let  $(M,g,\nabla^M)$ a compact statistical manifold 
equipped with a $\overline{\nabla}^{\substack{\scalebox{0.3}{\phantom{i}}\\M}}$-parallel volume form $\theta$, 
and $(N,h,\nabla^N)$ be a Riemannian one.  
By Proposition \ref{adjoint-ness}, 
the equation $\tau(u)=0$ holds 
if and only if the map $u:M\to N$ is a critical point 
of the following functional:     
    \begin{equation*}
        E(u)=\frac{1}{2}\int_{M}\|du\|_{g,h}^2\, \theta .
    \end{equation*}
\end{remark}

\vspace{0.5\baselineskip}

In the following, we observe some settings 
in which $\tau_2(u)=0$ implies $\tau(u)=0$. 

\vspace{0.5\baselineskip}

\begin{lemma}\label{TauPar}
Let $(M,g,\nabla^M)$, $(N,h,\nabla^N)$ be statistical manifolds. 
Assume that $M$ is compact, 
and that there exists a $\overline{\nabla}^{\substack{\scalebox{0.3}{\phantom{i}}\\M}}$-parallel volume form $\theta$ on $M$. 
For a map $u:M \to N$, 
if $\overline{\nabla}^{\substack{\scalebox{0.3}{\phantom{i}}\\N}} \tau(u)$ vanishes, then so does $\tau(u)$. 
\end{lemma}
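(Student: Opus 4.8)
The plan is to prove that $\int_M \|\tau(u)\|_h^2\,\theta = 0$, which forces $\tau(u)=0$ since $\theta$ is a (nowhere-vanishing) volume form and $h$ is positive definite. The whole mechanism is integration by parts: I will exhibit $\tau(u)$ as minus the formal adjoint of $du$ and then transfer the derivative onto $\tau(u)$, where it vanishes by hypothesis.

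First I would set $E=u^{-1}TN$ equipped with the pullback fiber metric $h$, and recall its two induced connections $\nabla^u$ and $\overline{\nabla}^u$, obtained from $\nabla^N$ and $\overline{\nabla}^{\substack{\scalebox{0.3}{\phantom{i}}\\N}}$ respectively; these are mutually conjugate with respect to $h$. With this notation the hypothesis that $\overline{\nabla}^{\substack{\scalebox{0.3}{\phantom{i}}\\N}}\tau(u)$ vanishes reads $\overline{\nabla}^u\tau(u)=0\in\Gamma(E\otimes T^*M)$.

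The key point, and the main obstacle, is matching the parallelism of the volume form. Proposition \ref{adjoint-ness}(1) is stated for a $\nabla$-parallel volume form, whereas here $\theta$ is $\overline{\nabla}^{\substack{\scalebox{0.3}{\phantom{i}}\\M}}$-parallel. I would therefore run the adjoint formalism relative to the conjugate statistical structure $(g,\overline{\nabla}^{\substack{\scalebox{0.3}{\phantom{i}}\\M}})$, whose own conjugate connection is $\nabla^M$, taking $\overline{\nabla}^u$ as the primary bundle connection so that its $h$-conjugate is $\nabla^u$. With these choices the adjoint $(\overline{\nabla}^u)^*$ of $\overline{\nabla}^u$ is computed using $\nabla^u$ and $\nabla^M$, and a direct comparison with the definition of the tension field in \eqref{tension} gives
\[
(\overline{\nabla}^u)^* du
=-\sum_{i=1}^m\left\{\nabla^N_{u_*e_i}u_*e_i-u_*\nabla^M_{e_i}e_i\right\}
=-\tau(u).
\]
The delicate part is precisely this bookkeeping: one must consistently interchange the roles of $\nabla$ and $\overline{\nabla}$, both on $M$ and on $E$, so that the hypothesis on $\theta$ becomes applicable and the sign in the identity above is correct.

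Finally, applying Proposition \ref{adjoint-ness}(1) to the structure $(g,\overline{\nabla}^{\substack{\scalebox{0.3}{\phantom{i}}\\M}})$ — legitimate because $\theta$ is $\overline{\nabla}^{\substack{\scalebox{0.3}{\phantom{i}}\\M}}$-parallel — with $\mathcal{E}=du$ and $\eta=\tau(u)$ yields
\[
\int_M\langle du,\overline{\nabla}^u\tau(u)\rangle\,\theta
=\int_M\langle(\overline{\nabla}^u)^* du,\tau(u)\rangle\,\theta
=-\int_M\|\tau(u)\|_h^2\,\theta.
\]
The left-hand side is $0$ by the hypothesis $\overline{\nabla}^u\tau(u)=0$, so $\int_M\|\tau(u)\|_h^2\,\theta=0$, and hence $\tau(u)=0$ on all of $M$.
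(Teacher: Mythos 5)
Your proof is correct, and it reaches the paper's conclusion by a genuinely different packaging of the same underlying integration by parts. The paper argues bare-handed: it sets $X=\sum_{i=1}^m h(u_*e_i,\tau(u))\,e_i\in\Gamma(TM)$ and computes $\operatorname{div}^{\theta}X=\sum_{i=1}^m g(\overline{\nabla}^{\substack{\scalebox{0.3}{\phantom{i}}\\M}}_{e_i}X,e_i)=\|\tau(u)\|_h^2$ directly — the hypothesis $\overline{\nabla}^{\substack{\scalebox{0.3}{\phantom{i}}\\N}}\tau(u)=0$ kills the terms where the derivative lands on $\tau(u)$, and conjugacy on $M$ turns the frame terms into $-h(u_*\nabla^M_{e_i}e_i,\tau(u))$ — after which Green's formula gives $\int_M\|\tau(u)\|_h^2\,\theta=0$. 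You instead invoke Proposition \ref{adjoint-ness}(1) for the conjugate statistical structure $(g,\overline{\nabla}^{\substack{\scalebox{0.3}{\phantom{i}}\\M}})$ with bundle connection $\overline{\nabla}^u$, so that the adjoint is built from $\nabla^u$ and $\nabla^M$ and the parallelism hypothesis of the proposition becomes exactly the assumed $\overline{\nabla}^{\substack{\scalebox{0.3}{\phantom{i}}\\M}}$-parallelism of $\theta$; these two bookkeeping points are the only delicate steps, and you handle both correctly, as is the sign in $(\overline{\nabla}^u)^*du=-\tau(u)$. The arguments coincide at bottom — unwinding the proof of the adjointness formula with $\mathcal{E}=du$ and $\eta=\tau(u)$ reproduces precisely the paper's vector field $X$ and its divergence identity — but yours is shorter and more structural: it exhibits the tension field as minus an adjoint of $du$, which is the same mechanism behind the Remark preceding the lemma (criticality of the energy $E$), whereas the paper's computation is self-contained and keeps the conjugate-swapping explicit. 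The standard facts you quote without proof (that $(g,\overline{\nabla}^{\substack{\scalebox{0.3}{\phantom{i}}\\M}})$ is again a statistical structure, and that the pullbacks $\nabla^u$ and $\overline{\nabla}^{\substack{\scalebox{0.3}{\phantom{i}}\\u}}$ are mutually conjugate with respect to $h$) are indeed standard and are used implicitly by the paper as well, so no gap there.
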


\begin{proof}
We put $X\in\Gamma(TM)$ as
    \begin{equation*}
        X=\sum_{i=1}^mh\bigl(u_*e_i,\tau(u)\bigr)e_i, 
    \end{equation*}
and obtain
    \begin{equation*}
    \begin{split}
    \operatorname{div}^{\theta}X
    &=\sum_{i=1}^mg(\overline{\nabla}^{\substack{\scalebox{0.3}{\phantom{i}}\\M}}_{e_i}X,e_i)\\
    &=\sum_{i=1}^m e_ih\bigl(u_*e_i,\tau(u)\bigr)
    +\sum_{i,j=1}^mh\left(u_*e_j,\tau(u)\right)
    g\left(\overline{\nabla}^{\substack{\scalebox{0.3}{\phantom{i}}\\M}}_{e_i}{e_j},e_i\right)\\
    &=\sum_{i=1}^mh\bigl(\nabla^N_{e_i}u_*e_i,\tau(u)\bigr)
    -\sum_{i=1}^mh\left(u_*(\nabla^M_{e_i}{e_i}),\tau(u)\right)\\
    &=h(\tau(u),\tau(u)).
    \end{split}
    \end{equation*}
Since $\theta$ is $\overline{\nabla}^{\substack{\scalebox{0.3}{\phantom{i}}\\M}}$-parallel,  
by Green's formula for the integration by $\theta$, we have 
    \begin{equation*}
        0=\int_{M}  \| \tau(u) \|_h^2 \, \theta,
    \end{equation*}
which concludes $\tau(u)=0$.
\end{proof}

\vspace{0.5\baselineskip}

\begin{proposition}
Let $(M,g,\nabla^M)$, $(N,h,\nabla^N)$ be statistical manifolds. For $(M,g,\nabla^M)$, assume that $M$ is compact, there exists a $\overline{\nabla}^{\substack{\scalebox{0.3}{\phantom{i}}\\M}}$-parallel volume form $\theta$ on $M$, and that $\operatorname{div}^g(\mathrm{tr}_gK^M)=0$ on $M$. 
For $(N,h,\nabla^N)$, assume that $(N,h,\nabla^N)$ is conjugate symmetric, 
and that the $U^N$-sectional curvature is non-positive on $(N,h)$, 
that is, for unit tangent vectors $X,Y$ on $(N,h)$,
    \begin{equation*}
        h\left(U^N(X,Y)Y,X\right)\leq 0, 
    \end{equation*}
where $U^N=2R^h-R^N$. 
If a statistical biharmonic map $u:(M,g,\nabla^M)\to(N,h,\nabla^N)$ 
satisfies $K^N\bigl(\tau(u),\tau(u)\bigr)=0$, then $\tau(u)=0$.
\end{proposition}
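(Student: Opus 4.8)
The plan is to pair the biharmonic equation with $\tau(u)$, integrate over the compact manifold $M$, and rearrange the result into an integral of a pointwise non-negative expression, forcing $\overline{\nabla}^u\tau(u)=0$; Lemma \ref{TauPar} then gives $\tau(u)=0$. First I would simplify the bi-tension field. In \eqref{eq:bitension} the term $\operatorname{div}^g(\mathrm{tr}_gK^M)\tau(u)$ drops by hypothesis, conjugate symmetry of $N$ gives $L^N=R^N$, and the assumption $K^N(\tau(u),\tau(u))=0$ removes the last term, so $\tau_2(u)=0$ becomes
\begin{equation*}
\overline{\Delta}^u\tau(u)=\sum_{i=1}^m R^N(u_*e_i,\tau(u))u_*e_i,
\end{equation*}
for a $g$-orthonormal frame $\{e_i\}$. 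Taking $h(\,\cdot\,,\tau(u))$ and integrating against $d\mu_g$ turns this into one scalar identity.

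For the left-hand side I would establish a statistical Bochner identity. Applying the divergence computation from the proof of Proposition \ref{lapfor} with $\xi=\eta=\tau(u)$, both to $(g,\nabla^M,\nabla^u)$ and to its conjugate $(g,\overline{\nabla}^M,\overline{\nabla}^u)$, and using $\operatorname{div}^g(\mathrm{tr}_gK^M)=0$ (which by Proposition \ref{lapfor} makes $\int_M h(\Delta^u\tau(u),\tau(u))\,d\mu_g=\int_M h(\overline{\Delta}^u\tau(u),\tau(u))\,d\mu_g$), the two resulting identities combine to kill the $\mathrm{tr}_gK^M$ terms once one observes that
\begin{equation*}
h\bigl((\nabla^u-\overline{\nabla}^u)_{\mathrm{tr}_gK^M}\tau(u),\tau(u)\bigr)
=2\,h\bigl(K^N(\tau(u),\tau(u)),u_*\mathrm{tr}_gK^M\bigr)=0
\end{equation*}
by $K^N(\tau(u),\tau(u))=0$ and total symmetry of the cubic form of $N$. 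The outcome is
\begin{equation*}
\int_M h(\overline{\Delta}^u\tau(u),\tau(u))\,d\mu_g
=-\int_M\sum_{i=1}^m h\bigl(\nabla^u_{e_i}\tau(u),\overline{\nabla}^u_{e_i}\tau(u)\bigr)\,d\mu_g .
\end{equation*}

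The curvature bookkeeping is where I expect the real work. Writing $\nabla^u=\nabla^{h,u}+K^N(u_*\cdot,\tau(u))$ and $\overline{\nabla}^u=\nabla^{h,u}-K^N(u_*\cdot,\tau(u))$, where $\nabla^{h,u}$ is the $u$-pullback of the Levi-Civita connection $\nabla^h$ of $(N,h)$, the Bochner integrand becomes $\|\nabla^{h,u}_{e_i}\tau(u)\|_h^2-\|K^N(u_*e_i,\tau(u))\|_h^2$. On the curvature side I would invoke $R^N=R^h+[K^N_\cdot,K^N_\cdot]$ (valid since $N$ is conjugate symmetric, by \eqref{scurvature}), the symmetries of $R^h$, total symmetry of the cubic form, and once more $K^N(\tau(u),\tau(u))=0$, to re-express $h(R^N(u_*e_i,\tau(u))u_*e_i,\tau(u))$ through $h(U^N(u_*e_i,\tau(u))\tau(u),u_*e_i)$ and $\|K^N(u_*e_i,\tau(u))\|_h^2$. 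Substituting everything into the integrated equation, the $K^N$-terms are designed to recombine into
\begin{equation*}
0=\int_M\sum_{i=1}^m\Bigl(\|\nabla^{h,u}_{e_i}\tau(u)\|_h^2+\|K^N(u_*e_i,\tau(u))\|_h^2-h\bigl(U^N(u_*e_i,\tau(u))\tau(u),u_*e_i\bigr)\Bigr)d\mu_g .
\end{equation*}

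Each summand is non-negative: the first two visibly, and the third because the $U^N$-sectional curvature is non-positive (the bound $h(U^N(X,Y)Y,X)\le0$ extends from unit vectors to all vectors by bi-homogeneity). Hence the integrand vanishes identically, giving $\nabla^{h,u}\tau(u)=0$ and $K^N(u_*e_i,\tau(u))=0$ for every $i$; adding these yields $\overline{\nabla}^u\tau(u)=\nabla^{h,u}\tau(u)-K^N(u_*\cdot,\tau(u))=0$, which is precisely the hypothesis of Lemma \ref{TauPar}, and that lemma concludes $\tau(u)=0$. The main obstacle is the bookkeeping in the middle two steps: one must keep the $K^N$-contributions coming from the Bochner side and from the conjugate-symmetric decomposition of $R^N$ exactly aligned so that they cancel against each other and leave the single non-negative expression above, and it is exactly here that $K^N(\tau(u),\tau(u))=0$ and the conjugate symmetry of $N$ are essential.
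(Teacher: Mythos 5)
Your argument is correct, and it follows the same overall strategy as the paper's proof --- an integrated Bochner-type identity combined with the sign hypothesis on $U^N$, concluding via Lemma \ref{TauPar} --- but the bookkeeping is organized differently. The paper starts from the pointwise formula for $\Delta_g\|\tau(u)\|_h^2$, integrates it (the left side dying by Green's formula), and uses Proposition \ref{lapfor}, conjugate symmetry, and $\tau_2(u)=0$ to reach
$0=\int_M\bigl\{\|\nabla^u\tau(u)\|^2_{g,h}+\|\overline{\nabla}^u\tau(u)\|^2_{g,h}-2\sum_{i=1}^m h\bigl(U^N(u_*e_i,\tau(u))\tau(u),u_*e_i\bigr)\bigr\}\,d\mu_g$,
whence $\nabla^u\tau(u)=\overline{\nabla}^u\tau(u)=0$. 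You instead pair the simplified biharmonic equation with $\tau(u)$, obtain the mixed-term identity by running the two divergence computations from the proof of Proposition \ref{lapfor} with $\xi=\eta=\tau(u)$ (your cancellation of the $\mathrm{tr}_gK^M$ terms via $h\bigl((\nabla^u-\overline{\nabla}^u)_{\mathrm{tr}_gK^M}\tau(u),\tau(u)\bigr)=2h\bigl(K^N(\tau(u),\tau(u)),u_*\mathrm{tr}_gK^M\bigr)=0$ is valid by total symmetry of the cubic form), and then split the mixed term through the Levi-Civita pullback $\nabla^{h,u}$. Since $\|\nabla^u_{e_i}\tau(u)\|_h^2+\|\overline{\nabla}^u_{e_i}\tau(u)\|_h^2=2\|\nabla^{h,u}_{e_i}\tau(u)\|_h^2+2\|K^N(u_*e_i,\tau(u))\|_h^2$, your final integrand is exactly one half of the paper's, so the two computations are equivalent. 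What your version buys is that the three summands are individually and visibly non-negative, so you obtain $K^N(u_*e_i,\tau(u))=0$ explicitly (the paper gets it only implicitly, by subtracting $\overline{\nabla}^u\tau(u)=0$ from $\nabla^u\tau(u)=0$), and you only need $\overline{\nabla}^u\tau(u)=0$ to feed into Lemma \ref{TauPar}. All the identities you flag as delicate do hold: $L^N=R^N$ and the skew-symmetry $h(R^N(X,Y)Z,W)=-h(R^N(X,Y)W,Z)$ by conjugate symmetry, $R^N=R^h+[K^N_X,K^N_Y]$ from \eqref{scurvature}, and the extension of the curvature bound from unit vectors to arbitrary vectors by homogeneity.
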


\vspace{0.5\baselineskip}

The above $U^N$ is found as an 
$
U^N=2R^h-(R^N+\overline{R}^{\substack{\scalebox{0.3}{\phantom{i}}\\N}})/2 \in \Gamma(TN^{(1,3)})
$ 
in \cite{FuruhataHasegawaSatoh2022}, 
in which the sectional curvature is defined by using $U^N$ 
even if $(h, \nabla^N)$ is not conjugate symmetric. 
This proposition is proved in \cite{GuoyingJiang} in the case where
$(g, \nabla^M)$ and $(h, \nabla^N)$ are Riemannian. 

\vspace{0.5\baselineskip}

\begin{proof}
Using the condition $K^N\bigl(\tau(u),\tau(u)\bigr)=0$ 
and the conjugate symmetricity of $(h, \nabla^N)$, 
we have 
\begin{eqnarray*}
&&  
\Delta_g \, 
\| \tau(u) \|_h^2 \\
&& \quad
=h\left(\Delta^u\tau(u),\tau(u)\right)
+h\left(\tau(u), \bar{\Delta}^u\tau(u) \right)\\
&& \qquad
+\sum_{i=1}^m \left\{ 4h\left(\left(R^N-R^h\right)
\bigl(u_*e_i,\tau(u)\bigr)\tau(u),u_*e_i\right) \right. \\
&& \qquad \qquad
\left. 
+h(\nabla^u_{e_i}\tau(u), \nabla^u_{e_i}\tau(u)) 
+h(\overline{\nabla}^{\substack{\scalebox{0.3}{\phantom{i}}\\u}}_{e_i}\tau(u), \overline{\nabla}^{\substack{\scalebox{0.3}{\phantom{i}}\\u}}_{e_i}\tau(u))
\right\}. 
\end{eqnarray*}
By Green's theorem, Proposition $\ref{lapfor}$, 
and by the condition  $\operatorname{div}^g(\mathrm{tr}_gK^M)=0$, 
the conjugate symmetricity of $(h, \nabla^N)$ and $\tau_2(u)=0$, 
we have
\begin{eqnarray*}
0&=&\int_{M}\Delta_g \, 
\| \tau(u) \|_h^2  
\,  d\mu_g \\
&=&2 \int_{M}h\left(\Delta^u\tau(u),\tau(u)\right)d\mu_g \\
&&\quad+4\sum_{i=1}^m\int_{M}h\left(
\left(R^N-R^h\right)\bigl(u_*e_i,\tau(u)\bigr)\tau(u),u_*e_i\right)d\mu_g\\
&&\qquad 
+\int_{M}
\{ 
\|\nabla^u\tau(u)\|^2_{g,h}+\|\overline{\nabla}^{\substack{\scalebox{0.3}{\phantom{i}}\\u}}\tau(u)\|^2_{g,h} \} d\mu_g \\
&=&-2\sum_{i=1}^m\int_{M}h\left(
U^N(u_*e_i,\tau(u))\tau(u),u_*e_i\right)d\mu_g\\
&&\quad+\int_{M}\{ \|\nabla^u\tau(u)\|^2_{g,h}+\|\overline{\nabla}^{\substack{\scalebox{0.3}{\phantom{i}}\\u}}\tau(u)\|^2_{g,h} \} d\mu_g. 
\end{eqnarray*}
Since the sectional curvature of $U^N$ is non-positive we have
    \begin{equation*}
        \nabla^u\tau(u)=\overline{\nabla}^{\substack{\scalebox{0.3}{\phantom{i}}\\u}}\tau(u)=0.
    \end{equation*}
Therefore, Lemma \ref{TauPar} completes the proof.  
\end{proof}

\vspace{0.5\baselineskip}

The following proposition states a local property 
in contrast to the previous ones. 
In the Riemannian setting it is obtained by \cite{MR2004799}. 

\vspace{0.5\baselineskip}

\begin{proposition}
Let $u:(M,g,\nabla^M)\to(N,h,\nabla^N)$ be a statistical hypersurface, 
that is, $(g,\nabla^M)$ is the statistical structure on $M$ 
induced by $u$ from $(h,\nabla^N)$, and $m=n-1$. 
Suppose that $(N,h,\nabla^N)$ is a trace-free and satisfies
\begin{equation}    \label{ricciinq}
    \operatorname{Ric}^N-\overline{\operatorname{Ric}}^{\substack{\scalebox{0.2}{\phantom{i}}\\N}}-2\operatorname{Ric}^h\geq0.
\end{equation}
If $u$ is a biharmonic map such that $K^N\bigl(\tau(u),\tau(u)\bigr)=0$, 
and $\|\tau(u)\|_h$ is constant on $M$, then $\tau(u)=0$.
\end{proposition}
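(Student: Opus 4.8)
The plan is to work entirely with the scalar function measuring the normal part of $\tau(u)$ and to read everything off the Gauss--Weingarten equations of the immersion; the integration device behind the earlier (compact) propositions is unavailable here, so the constancy of $\|\tau(u)\|_h$ must be exploited pointwise instead. Regard $u$ as an isometric immersion ($g=u^{*}h$) with unit normal $\hat\xi$, so that $\tau(u)=\mathrm{tr}_g\beta\,\hat\xi=:H_0\hat\xi$, where $\beta(X,Y)\hat\xi$ is the normal part of $\nabla^N_{u_*X}u_*Y$. Since $\|\tau(u)\|_h=|H_0|$ is constant, either $H_0\equiv0$ (and we are done) or $H_0$ is a nonzero constant; I assume the latter and derive a contradiction. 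From $K^N(\tau,\tau)=0$ and $H_0\neq0$ we get $K^N(\hat\xi,\hat\xi)=0$, and since $(X,Y,Z)\mapsto h(K^N(X,Y),Z)$ is totally symmetric this kills the normal parts of $\nabla^N_{u_*X}\hat\xi$ and $\overline{\nabla}^N_{u_*X}\hat\xi$. Writing $A_N,\overline{A}_N$ for the corresponding shape operators, the relation $K^M(X,Y)=\bigl(K^N(u_*X,u_*Y)\bigr)^{\top}$ together with the trace-freeness of $N$ then yields $\mathrm{tr}_gK^M=0$ (so the divergence term in \eqref{eq:bitension} disappears) and the trace identity $\mathrm{tr}\,A_N=\mathrm{tr}\,\overline{A}_N$.

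The argument rests on two pointwise identities. First, computing the normal component $h(\tau_2(u),\hat\xi)$ of the now-simplified bi-tension field directly from Gauss--Weingarten --- exactly as in the proof of Theorem~\ref{affineimmersion}, now with $H_0$ constant and the vanishing normal terms --- the biharmonicity $\tau_2(u)=0$ becomes
\[
\textstyle\sum_i g(A_Ne_i,\overline{A}_Ne_i)=\overline{\operatorname{Ric}}^N(\hat\xi,\hat\xi).
\]
Second, I would evaluate $\sum_i\|K^N(u_*e_i,\hat\xi)\|_h^2$ in two ways: geometrically it equals $\tfrac14\|A_N-\overline{A}_N\|^2$, while \eqref{scurvature} together with $K^N(\hat\xi,\hat\xi)=0$ rewrites it as $\bigl(\operatorname{Ric}^h-\tfrac12\operatorname{Ric}^N-\tfrac12\overline{\operatorname{Ric}}^N\bigr)(\hat\xi,\hat\xi)$.

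Let $A=\tfrac12(A_N+\overline{A}_N)$ be the $g$-self-adjoint shape operator of the Levi-Civita connection. The two identities combine, via $\|A_N+\overline{A}_N\|^2=\|A_N-\overline{A}_N\|^2+4\sum_i g(A_Ne_i,\overline{A}_Ne_i)$, into
\[
\|A\|^2=-\tfrac12\bigl(\operatorname{Ric}^N-\overline{\operatorname{Ric}}^N-2\operatorname{Ric}^h\bigr)(\hat\xi,\hat\xi)\le 0
\]
by the curvature hypothesis \eqref{ricciinq}; hence $A=0$. Then $\mathrm{tr}\,A_N+\mathrm{tr}\,\overline{A}_N=0$, which combined with $\mathrm{tr}\,A_N=\mathrm{tr}\,\overline{A}_N$ forces $\mathrm{tr}\,\overline{A}_N=0$. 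But $H_0=\mathrm{tr}_g\beta=\mathrm{tr}\,\overline{A}_N$, so $H_0=0$, contradicting $H_0\neq0$. Therefore $\tau(u)=0$.

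The main obstacle is to resist mimicking the compact proofs: expanding $\Delta_g\|\tau(u)\|_h^2=0$ and substituting the Gauss--Weingarten expressions degenerates into the Gauss identity $0=0$ and carries no information. The real content lies in the normal-component computation of $\tau_2(u)$ and, above all, in bookkeeping the three Ricci tensors $\operatorname{Ric}^N,\overline{\operatorname{Ric}}^N,\operatorname{Ric}^h$ against the three shape operators $A_N,\overline{A}_N,A$ precisely enough that the exact combination of \eqref{ricciinq} appears with the correct coefficients --- it is the sign there that renders $\|A\|^2\le0$ and closes the proof.
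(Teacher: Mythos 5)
Your proof is correct, and despite the different dress it is essentially the paper's own argument; the correspondence is worth making explicit. The paper also argues pointwise (no compactness or integration is used): it derives $\mathrm{tr}_gK^M=0$ exactly as you do, expands $\frac{1}{2}\Delta\|\tau(u)\|^2_h$ through the conjugate pair $\nabla^u,\overline{\nabla}^u$, substitutes the biharmonic equation into $h\left(\bar{\Delta}^u\tau(u),\tau(u)\right)$ (this is your identity $\sum_i g(A_Ne_i,\overline{A}_Ne_i)=\overline{\operatorname{Ric}}^N(\hat\xi,\hat\xi)$, contracted with $\tau(u)$ instead of $\hat\xi$), and applies \eqref{scurvature} to $\sum_i\|K^N(u_*e_i,\tau(u))\|^2_h$ (your second identity), arriving at \eqref{laptau2}; constancy kills the left-hand side, \eqref{ricciinq} forces $\widehat{\nabla}^u\tau(u)=0$, and a final trace computation yields $\|\tau(u)\|^2_h=0$. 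Under the dictionary $\nabla^u_{e_i}\tau(u)=-H_0\,u_*A_Ne_i$, $\overline{\nabla}^u_{e_i}\tau(u)=-H_0\,u_*\overline{A}_Ne_i$, $\widehat{\nabla}^u_{e_i}\tau(u)=-H_0\,u_*Ae_i$ (valid precisely because $H_0$ is constant), your parallelogram identity $\|A_N+\overline{A}_N\|^2=\|A_N-\overline{A}_N\|^2+4\sum_i g(A_Ne_i,\overline{A}_Ne_i)$ is the paper's splitting $\langle\nabla^u\tau(u),\overline{\nabla}^u\tau(u)\rangle_{g,h}=\|\widehat{\nabla}^u\tau(u)\|^2_{g,h}-\sum_i\|K^N(u_*e_i,\tau(u))\|^2_h$, your conclusion $A=0$ is the paper's $\widehat{\nabla}^u\tau(u)=0$, and your trace contradiction $H_0=\mathrm{tr}\,A=0$ is the paper's closing display. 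What your packaging buys: you never have to write down a Laplacian, so the somewhat delicate first equality of \eqref{laptau2} is bypassed, and the role of the curvature hypothesis appears as a transparent linear-algebra inequality $\|A\|^2\le 0$. What it costs: the bookkeeping of three shape operators and the duality $\beta(X,Y)=g(\overline{A}_NX,Y)$, which you use silently when asserting $H_0=\mathrm{tr}\,\overline{A}_N$ and when reading off the normal component of $\overline{\nabla}^N_{u_*e_i}u_*\overline{A}_Ne_i$ as $-g(A_Ne_i,\overline{A}_Ne_i)$; these steps are standard but should be stated. One caveat on your closing remark: expanding $\Delta_g\|\tau(u)\|^2_h=0$ degenerates into $0=0$ only if one expands with the Levi-Civita connections throughout; expanded through the dual connections, as the paper does, it is exactly where biharmonicity and \eqref{ricciinq} enter --- the same place they enter your two identities.
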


\begin{proof}
Assume that $\tau(u)\neq0$. 
    
We take an orthonormal basis 
$\{u_* e_1, \ldots, u_* e_m, \|\tau(u)\|_h^{-1}\tau(u) \}$ for $h$, 
remarking that $u$ is an isometric immersion. 
Since $(h,\nabla^N)$ is trace-free,
\begin{eqnarray*}
\mathrm{tr}_gK^M
&=&
\sum_{i=1}^m h(u_* K^M(e_j, e_j), u_* e_i) e_i \\
&=&
-\sum_{i=1}^m\|\tau(u)\|^{-2}_h
h\left(K^N\bigl(\tau(u),\tau(u)\bigr),u_*e_i\right)e_i=0. 
\end{eqnarray*}
By using $\mathrm{tr}_gK^M=0$ and $K^N\bigl(\tau(u),\tau(u)\bigr)=0$ again, 
we calculate the statistical Laplacian of $\|\tau(u)\|^2_h$ as  
\begin{eqnarray}\label{laptau2}
            \frac{1}{2}\Delta \|\tau(u)\|^2_h
            &=&
            h\left(\bar{\Delta}^u\tau(u),\tau(u)\right)
            +\left\langle\nabla^u\tau(u),
            \overline{\nabla}^{\substack{\scalebox{0.3}{\phantom{i}}\\u}}\tau(u)\right\rangle_{g,h} \nonumber\\
            &=&
            h\left(\bar{\Delta}^u\tau(u),\tau(u)\right)
            +\|\widehat{\nabla}^u\tau(u)\|^2_{g,h}
            -\sum_{i=1}^m\|K^N\bigl(u_*e_i,\tau(u)\bigr)\|^2_h \nonumber \\
            &=&
            \dfrac{1}{2} \left\{
\operatorname{Ric}^N\bigl(\tau(u),\tau(u)\bigr)
-\overline{\operatorname{Ric}}^{\substack{\scalebox{0.2}{\phantom{i}}\\N}}\bigl(\tau(u),\tau(u)\bigr)
-2\operatorname{Ric}^h\bigl(\tau(u),\tau(u)\bigr)
\right\} \nonumber \\
&& \qquad 
+\|\widehat{\nabla}^u\tau(u)\|^2_{g,h}, 
\end{eqnarray}
where $\widehat{\nabla}^u$ is the connection on $u^{-1}TN$ 
induced by $u$ from $\widehat{\nabla}=\nabla^h$.  
Indeed, the last equality is obtained as follows. 
For the third term, by \eqref{scurvature},  we have
    \begin{equation*}
        \begin{split}
    \sum_{i=1}^m\|K^N\bigl(u_*e_i,\tau(u)\bigr)\|^2_h
    &=\sum_{i=1}^mh\left(K^N_{\tau(u)}K^N_{u_*e_i}\tau(u),u_*e_i\right)\\
    &=\sum_{i=1}^mh\left(
    \lbrack K^N_{\tau(u)},K^N_{u_*e_i}\rbrack
    \tau(u),u_*e_i\right)\\
    &=-\frac{1}{2}\operatorname{Ric}^N\left(\tau(u),\tau(u)\right)
    -\frac{1}{2}\overline{\operatorname{Ric}}^{\substack{\scalebox{0.2}{\phantom{i}}\\N}}\left(\tau(u),\tau(u)\right)\\
    &\quad+\operatorname{Ric}^h\left(\tau(u),\tau(u)\right),
        \end{split}
    \end{equation*}
and for the first term, since $\tau_2(u)=0$, 
    \begin{equation*}
    \begin{split}
        h\left(\bar{\Delta}^u\tau(u),\tau(u)\right)
        &=\sum_{i=1}^mh\left(L^N(u_*e_i,\tau(u))u_*e_i,\tau(u)\right)\\
        &=-\sum_{i=1}^mh\left(R^N(\tau(u),u_*e_i)u_*e_i,\tau(u)\right)\\
        &=-\sum_{i=1}^mh\left(\overline{R}^{\substack{\scalebox{0.3}{\phantom{i}}\\N}}(u_*e_i,\tau(u))\tau(u),u_*e_i\right)\\
        &=-\overline{\operatorname{Ric}}^{\substack{\scalebox{0.2}{\phantom{i}}\\N}}\bigl(\tau(u),\tau(u)\bigr), 
    \end{split}
    \end{equation*}
from which we have \eqref{laptau2}. 
    
By \eqref{laptau2} and $(\ref{ricciinq})$ 
we obtain $\widehat{\nabla}^u\tau(u)=0$ since  $\|\tau(u)\|$ is constant. 
    
We will show that $\|\tau(u)\|_h=0$, which contradicts that $h$ is a positive definite metric, which completes the proof.  
\begin{equation*}
    \begin{split}
        0&=\sum_{i=1}^m h\left(u_*e_i,\widehat{\nabla}^u_{e_i}\tau(u)\right)\\
        &=-\sum_{i=1}^m h\left(\widehat{\nabla}^u_{e_i}u_*e_i,\tau(u)\right)\\
        &=-h\bigl(\tau(u),\tau(u)\bigr)+\sum_{i=1}^m h\left(K^N\bigl(u_*e_i,u_*e_i\bigr),\tau(u)\right)\\
        &=-\|\tau(u)\|_h^2-\|\tau(u)\|^{-2}_hh(K^N\bigl(\tau(u),\tau(u)\bigr),\tau(u))\\
        &=-\|\tau(u)\|_h^2.
    \end{split}
\end{equation*}
\end{proof}

\backmatter

\section*{Statements and Declarations}

\subsection*{Acknowledgments}
\bmhead{Funding}
This work was supported by the Japan Society for the Promotion of Science KAKENHI, Grant Number JP22K03279, and by the Japan Science and Technology agency, the establishment of university fellowships towards the creation of science technology innovation, Grant Number JPMJFS2101.






%
%
%
%



\begin{thebibliography}{99}%

\bibitem{MR1823925}
Baues, O., Cort\'{e}s, V.: Realisation of special {K}\"{a}hler manifolds as parabolic spheres. Proc. Amer. Math. Soc. \textbf{129}(8), 2403-2407 (2001)

\bibitem{MR1143504}
Cheng, B.-Y.: Some open problems and conjectures on submanifolds of finite type. Soochow J. Math. \textbf{17}(2), 169-188 (1991)

\bibitem{FuruhataHasegawaSatoh2022}
Furuhata, H., Hasegawa I., Satoh N.: Chen invariants and statistical submanifolds. Commun. Korean Math. Soc. \textbf{37}(3), 851-864 (2022)

\bibitem{GuoyingJiang}
Jiang, G.Y.: 2-Harmonic maps and their first and second variational formulas. Translated from the Chinese by Hajime Urakawa. Note Mat. \textbf{28}, 209-232 (2009)

\bibitem{JiangTavakoliZhao}
Jiang, R., Tavakoli, J., Zhao, Y.: Laplacian operator on statistical manifold. Inf. Geom. \textbf{6}(1), 69-79 (2023)

\bibitem{Jostcimcir}
Jost, J., \c{S}im\c{s}ir, F.M.: Affine harmonic maps. Analysis (Munich). \textbf{29}(2), 185-197 (2009)

\bibitem{KobayashiNomizu1}
Kobayashi, S., Nomizu, K.: Foundations of differential geometry. {V}ol. {I} John Wiley \& Sons, Inc., New York (1963)

\bibitem{MR1317826}
Liu, H.L., Wang, C.P.: The centroaffine {T}chebychev operator. Results Math. \textbf{27}(1-2), 77-92 (1995)

\bibitem{NomizuSasaki}
Nomizu, K., Sasaki, T.: Affine differential geometry. Cambridge University Press, Cambridge (1994)

\bibitem{MR2004799}
Oniciuc, C.: Biharmonic maps between {R}iemannian manifolds. An. \c{S}tiin\c{t}. Univ. Al. I. Cuza Ia\c{s}i. Mat. (N.S.) \textbf{48}(2), 237-248 (2002)

\bibitem{MR3422914}
Opozda, B.: Bochner's technique for statistical structures. Ann. Global Anal. Geom. \textbf{48}(4), 357-395 (2015)

\bibitem{MR4010286}
{\c S}im\c{s}ir, F.M.: A note on harmonic maps of statistical manifolds. Commun. Fac. Sci. Univ. Ank. Ser. A1. Math. Stat. \textbf{68}(2), 1370-1376 (2019)

\bibitem{MR3329737}
Uohashi, K.: Harmonic maps relative to {$\alpha$}-connections. In: Nielsen, F. (ed.) Geometric theory of information, pp. 81-96. Springer Cham.(2014)

\bibitem{MR3889042}
Urakawa, H.: Geometry of biharmonic mappings. World Scientific Publishing Co. Pte. Ltd., Hackensack, N.J. (2019)

%
\end{thebibliography}
\end{document}